\documentclass[11pt]{article}

\usepackage{amsfonts,amsmath,amsthm,amssymb,multicol}

\newtheorem{theorem}{Theorem}
\newtheorem{lemma}{Lemma}
\newtheorem{corollary}{Corollary}

\renewcommand{\emptyset}{\varnothing}



\newcommand\dis{\mathbf d}



\newcommand{\N}{\mathbb{N}}

\newcommand{\R}{\mathbb{R}}

\newcommand{\Par}{\overset{\mathbf{P}}{\longrightarrow}}

\title{\small\bf INVARIANCE OF FLUID LIMITS FOR THE\\
 SHORTEST REMAINING PROCESSING TIME AND\\
	SHORTEST JOB FIRST POLICIES}

\author{ {\small\sc 
H.\ Christian Gromoll\thanks{Research supported in part by NSF grant DMS
0707111} and Martin Keutel} \\
{\em\footnotesize University of Virginia} 
}

\begin{document}

\maketitle

\begin{abstract}
We consider a single-server queue with renewal arrivals and i.i.d.\ service
times, in which the server employs either the preemptive
Shortest Remaining Processing Time (SRPT) policy, or its non-preemptive
variant, Shortest Job First (SJF). We show that for given stochastic
primitives (initial condition, arrival and service processes), the model has
the same fluid limit under either policy. In particular, we conclude that the
well-known queue length optimality of preemptive SRPT is also achieved,
asymptotically on fluid scale, by the simpler-to-implement SJF policy. We also
conclude that on fluid scale, SJF and SRPT achieve the same performance with
respect to response times of the longest-waiting jobs in the system.
\end{abstract}

\noindent
{\em AMS 2010 subject classifications.} Primary 60K25, 60F17; secondary 60G57,
68M20, 90B22.

\noindent
{\em Key words.} Queueing, queue length, shortest remaining processing time,
shortest job first, shortest job next, fluid limit.

\section{Introduction}
\label{intro}

Schrage's \cite{Schrage1968} classic result asserts that for given stochastic
primitives (initial condition, arrival and service processes), the preemptive
Shortest Remaining Processing Time (SRPT) policy minimizes queue length at
all times, over all (work-conserving) policies. Recently, Gromoll, Kruk, and
Puha \cite{GrKruPuh2010} showed that, in the heavy-traffic regime, the
diffusion scaled queue length process under SRPT also achieves the optimal
lower bound $W(t)/x^*$ for all $t\ge0$, where $W(\cdot)$ is the diffusion
limit of the workload process and $x^*$ is the supremum of the support of the
service time distribution. 

Under SRPT, preemptive priority is given to the job that can be completed
first. More precisely, consider a single server queue with renewal arrivals
and i.i.d.\ service times, and let $\mathcal{I}(t)$ index, in arrival order,
those jobs that are in the queue at time $t$. For a job $i\in \mathcal{I}(t)$,
let $v_i(t)$ denote its {\em residual service time} at time $t$, or the
remaining amount of processing time required to complete this job. If
$j\in \mathcal{I}(t)$ is the smallest index such that $v_j(t)\le v_i(t)$ for
all $i\in \mathcal{I}(t)$, then under SRPT, $\frac{d}{dt}v_j(t+)=-1$ and
$\frac{d}{dt}v_i(t+)=0$ for all $i\in \mathcal{I}(t)\setminus j$. 

Due to its simple description and attractive optimality properties (see also
Smith \cite{Smith1956}), the SRPT policy and its relatives have received much
attention over the past several decades; see Schreiber \cite{Schreiber1993}
for a survey, as well as \cite{BansalHarchol2001, DoGrPu} and references
therein for more recent work. 

Despite its theoretical advantages, SRPT may not always be the best choice
in practice. Practical disadvantages include inexact job size information, the
need to continuously monitor the state of the queue and keep this state as a
continuum of priority classes, inefficiencies associated with frequent
switchovers, and long response times for large jobs. See for example
\cite{AM2005, NW2008}. 

These disadvantages have led researchers to consider a number of variants of
SRPT, all of which retain the essential feature of prioritizing smaller jobs
in some way.  Care is needed when reviewing the literature, as differing
terminology is in use. Usually, ``SRPT'' refers to the preemptive policy
defined above.  This policy may also be referred to as Shortest Time to
Completion First, or STCF.  

Shortest Job First (SJF) is the non-preemptive variant of SRPT. An SJF server
still selects the smallest job in the queue. But once begun, this job is
served to completion before the server makes another selection. This policy
may also be referred to as Shortest Job Next (SJN). There are also preemptive
versions of SJF and SJN, which differ from SRPT because they prioritize
based on initial rather than remaining job size. Nuyens and Wierman
\cite{NW2008} have introduced the class of $\epsilon$-SMART policies, which
include the preemptive policies mentioned above as well as policies that allow
for inexact job size information and finite sets of priority classes based on
job size. 

In this paper, we compare the preemptive SRPT policy to its non-preemptive SJF
variant.  Our analysis implies that, in many respects, these two policies
achieve the same performance (although not in all respects; see below).
Although SJF does not resolve all of the criticisms that have been raised for
SRPT, it does have important advantages from an implementation stand point. In
particular, there is no need to continuously monitor the state of the queue
and it avoids any penalties for interrupting service. Consequently,
practitioners interested in performance measures for which there is no
essential difference between the two policies may opt for the simpler SJF
policy, and this paper provides the theoretical justification for such a
choice.

\subsection{Main result}

Consider an $r$-indexed sequence of single server queues, each with renewal
arrivals, generally distributed i.i.d.\ service times, and random initial
state. In the $r$th model, let $\mathcal{Z}^r_p(\cdot)$ denote the
measure-valued state descriptor when the server employs preemptive SRPT, and
let $\mathcal{Z}^r(\cdot)$ denote the state descriptor under SJF. These
processes track the residual service times of all jobs, as they evolve
according to each respective policy; see Section \ref{sec:2} for details. Let
$\bar{\mathcal{Z}}^r_p(\cdot)$ and $\bar{\mathcal{Z}}^r(\cdot)$ denote these
processes under fluid scaling.  We will impose the standard asymptotic
assumptions \eqref{Rem,1}--\eqref{Rem,5} of Section \ref{sec:limits}. In
particular, there is a limiting random initial measure $\mathcal{Z}_0$, a
limiting arrival rate $\alpha$, and a limiting service time distribution
$\nu$, such that the limiting traffic intensity is at most one. 

In this setting, Down, Gromoll, and Puha \cite{DoGrPu} showed that as
$r\to\infty$, the sequence $\{\bar{\mathcal{Z}}_p^r(\cdot)\}$ converges in
distribution to a process $\mathcal{Z}^*(\cdot)$ that is almost surely an {\em
SRPT fluid model solution} for the data $(\alpha,\nu)$ and initial condition
$\mathcal{Z}^*(0)$, where $\mathcal{Z}^*(0)=\mathcal{Z}_0$ in distribution;
see Theorem 3.9 in \cite{DoGrPu}.

An SRPT fluid model solution for $(\alpha,\nu)$ with initial condition $\xi$
is a deterministic measure-valued function $\zeta(\cdot)$ satisfying
$\zeta(0)=\xi$ and a certain family of dynamic inequalities; see Section 2.2,
Definition 2.2 in \cite{DoGrPu}. Properties of SRPT fluid model solutions were
extensively studied in \cite{DoGrPu}. 

The main result of this paper is that the sequence of state descriptors under
the SJF policy has the same fluid limit:

\begin{theorem}
\label{upshot} 
Under the asymptotic assumptions \eqref{Rem,1}--\eqref{Rem,5}, the sequence
$\{\bar{\mathcal{Z}}^r(\cdot)\}$ converges in distribution to the measure
valued process $\mathcal{Z}^*(\cdot)$, where $\mathcal{Z}^*(\cdot)$ is almost
surely an SRPT fluid model solution for the data $(\alpha,\nu)$ and initial
condition $\mathcal{Z}^*(0)$, and $\mathcal{Z}^*(0)$ is equal in distribution
to $\mathcal{Z}_0$.
\end{theorem}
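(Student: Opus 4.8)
The plan is to run the usual two-step fluid-limit program for $\{\bar{\mathcal{Z}}^r(\cdot)\}$ and then match the resulting limit to the SRPT fluid limit $\mathcal{Z}^*(\cdot)$ of \cite{DoGrPu}. The starting observation is that SJF, like SRPT, is work-conserving, and the two policies run off the same primitives; hence the unscaled, and therefore also the fluid-scaled, workload processes coincide pathwise under SJF and SRPT, and the a priori bounds on the number of jobs and on the mass placed away from the origin that underlie \cite{DoGrPu} are policy-independent. Using these, together with the convergence of the fluid-scaled arrival and cumulative-service processes to Lipschitz limits, I would prove that $\{\bar{\mathcal{Z}}^r(\cdot)\}$ is relatively compact in the Skorokhod space $D(\ptime,\M)$ of measure-valued paths, by an oscillation estimate of the type carried out in \cite{DoGrPu}.

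The second and principal step is to show that every weak subsequential limit $\tilde{\mathcal{Z}}(\cdot)$ of $\{\bar{\mathcal{Z}}^r(\cdot)\}$ is almost surely an SRPT fluid model solution for $(\alpha,\nu)$ with $\tilde{\mathcal{Z}}(0)$ equal in distribution to $\mathcal{Z}_0$. The convergence of initial conditions and the mass-balance relations (inflow of arriving jobs, outflow of completed jobs) pass to the limit exactly as in \cite{DoGrPu}. The crux is the family of dynamic inequalities that encode the shortest-remaining-time priority. Under preemptive SRPT the server always processes a job of least residual time, so in the fluid model mass leaves only at the left edge of the measure; under SJF the server is, between successive service completions, committed to a single job whose residual time need not be minimal. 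I would dispose of this discrepancy by showing it is asymptotically negligible: at any time $t$ the SJF configuration differs from a genuine shortest-first configuration only through the one job currently in service — which carries fluid mass $1/r\to0$ — and through the jobs that arrived during the current non-preemptive service episode; the latter episode has length at most one service time, and since $t\,\Prob(\text{service time}>t)\to0$ under the finite-mean hypothesis, the maximal service time among the $O(r)$ jobs handled over a fluid horizon $[0,T]$ is $o(r)$ in probability, so the number of such out-of-order jobs is $o(r)$, i.e.\ $o(1)$ on fluid scale, uniformly in $t\le T$. Inserting these bounds into the prelimit dynamical relations shows that $\tilde{\mathcal{Z}}(\cdot)$ satisfies precisely the SRPT fluid model inequalities.

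It remains to conclude that the whole sequence converges to the particular process $\mathcal{Z}^*(\cdot)$. For this I would appeal to the characterization of SRPT fluid model solutions in \cite{DoGrPu}: a solution for $(\alpha,\nu)$ is determined by its initial condition in the sense relevant here, so $\tilde{\mathcal{Z}}(\cdot)$ and $\mathcal{Z}^*(\cdot)$ — both SRPT fluid model solutions with initial condition equal in distribution to $\mathcal{Z}_0$ — have the same law, and since every subsequential limit of $\{\bar{\mathcal{Z}}^r(\cdot)\}$ has this law, the sequence converges in distribution to $\mathcal{Z}^*(\cdot)$. An alternative, avoiding any uniqueness statement, is to realize the SJF and SRPT systems on a common probability space with the same primitives and to show directly that the fluid-scaled Prohorov distance between $\bar{\mathcal{Z}}^r(\cdot)$ and $\bar{\mathcal{Z}}^r_p(\cdot)$ tends to $0$ uniformly on compact time sets — again using that the two systems differ only by the single committed job and the $o(r)$ jobs that queued during its service, together with the queue-length optimality of SRPT — and then to transfer the limit from $\bar{\mathcal{Z}}^r_p(\cdot)$. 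The main obstacle is the identification step: making rigorous, and uniform in time, the assertion that the non-preemptive commitment of SJF perturbs the shortest-first dynamics by only $o(1)$ on fluid scale, and checking that this indeed preserves each of the \cite{DoGrPu} fluid model inequalities; the remaining pieces are routine adaptations of \cite{DoGrPu}.
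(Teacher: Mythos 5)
Your primary route (tightness of $\{\bar{\mathcal{Z}}^r(\cdot)\}$, identification of subsequential limits as SRPT fluid model solutions, then uniqueness of solutions given the initial condition) is not the paper's route, and it has a gap you cannot close with what is available: the SRPT fluid model is defined only by a family of dynamic \emph{inequalities}, and no uniqueness theorem for its solutions is stated in \cite{DoGrPu} or invoked here. This is exactly why the paper instead takes what you list as the ``alternative'': it proves (Theorem \ref{main theorem}) that $\sup_{t\in[0,T]}\dis[\bar{\mathcal{Z}}^r(t),\bar{\mathcal{Z}}^r_p(t)]\Par 0$ and then transfers the known limit of $\bar{\mathcal{Z}}^r_p(\cdot)$ by the converging-together lemma. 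So the viable skeleton is your second option, not your first.

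The genuine gap, however, is in how you propose to bound the distance between the two prelimit descriptors. Your claim that at time $t$ the SJF state differs from the SRPT state ``only through the one job currently in service and the $o(r)$ jobs that arrived during the current service episode'' is false: the two policies complete \emph{different sets} of jobs and leave different partial residuals, and this discrepancy accumulates over the $\Theta(r)$ service episodes in $[0,rT]$; a per-episode $o(r)$ bound does not telescope into a global $o(r)$ bound on the symmetric difference of the two configurations. The paper's actual mechanism is structural rather than perturbative: a pathwise induction over the start times of frontier jobs (Lemma \ref{startTimes}, driven by the workload identities of Lemmas \ref{interStartTimes-n} and \ref{interStartTimes-p}) shows the two policies have \emph{identical} frontier processes, hence identical configurations at and above the frontier (Corollaries \ref{equal frontiers} and \ref{same above}), since neither policy has touched those jobs; all disagreement is confined strictly below the frontier, where the workload is the same for both policies and vanishes uniformly on compacts under fluid scaling (Lemma \ref{unif conv}, obtained from (96) of \cite{DoGrPu} upgraded to uniform convergence via the dynamic inequality \eqref{dyn ineq}), with the residual mass near the origin excised separately (Lemma \ref{lowMass}) before applying Markov's inequality in the Prohorov metric. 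Without the frontier identity, neither your direct-distance argument nor your identification of the fluid model inequalities for SJF goes through.
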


\subsection{Implications for comparing performance}

Since the queue length processes under SJF and SRPT are given by the total
mass of the state descriptors $\mathcal{Z}^r(\cdot)$ and $\mathcal{Z}^r_p(\cdot)$
respectively, an immediate consequence of the above theorem is that,
asymptotically on fluid scale, SJF enjoys the same queue length optimality as
SRPT. 

A second consequence relates to state-dependent response times. A job's
state-dependent response time is the time until it exits the system,
conditional on the state of the system (the configuration of all current
residual service times) when it arrives. The fluid limit
$\mathcal{Z}^*(\cdot)$ can be used to calculate a fluid approximation $s(x)$
to the state-dependent response time of a job of size $x$, as a function of an
initial state $\xi$; see \cite{DoGrPu2, DoGrPu}.  Thus Theorem \ref{upshot}
implies that, asymptotically on fluid scale, a job of size $x$ arriving to a
queue in state $\xi$ will have the same state-dependent response time under
SJF or SRPT. Moreover, the system will be in the same state $\xi$ under either
SJF or SRPT. 

Note that SJF and SRPT exhibit differing performance for the {\em mean}
response time of a job of size $x$ (that is, averaged over all possible states
encountered on arrival when the system is in steady state).  Bansal and
Harchol-Balter \cite{BansalHarchol2001} show for heavy-tailed service time
distributions, that the mean response time under SRPT is orders of magnitude
lower than under SJF, while the very largest jobs have smaller mean response
times under SJF. See also Harchol-Balter, Sigman, and Wierman \cite{HSW2002}
for explicit formulae for mean response times  under SRPT, SJF, and several
other protocols. 

The reason SJF and SRPT have the same fluid approximation for state-dependent
response times, is that fluid scaling compresses time such that individual
jobs -- even large ones -- exit almost instantaneously once they begin
service. In fact, any arriving job with service time smaller than the largest
job to begin service during the current busy period, or {\em frontier}, will
have negligible response time on fluid scale. Jobs with service times larger
than the frontier wait a positive time on fluid scale to begin service for the
first time. 

Thus, for either policy, the fluid limit shows that response times for jobs
larger than the frontier are orders of magnitude longer than for those below
it. In particular, the fluid limit captures the profile of reponse times only
for those jobs that experience the very longest response times in the system,
namely, those that are bigger than the frontier when they arrive.
And given the initial state, these response times are the same under SJF or
SRPT. This conclusion is consistent with Nuyens and Zwart \cite{NZ2006}, where
it is shown that the steady-state response time has the same asymptotic decay
rate under SJF and SRPT.

\subsection{Notation}
\label{sec:1}
For real numbers $a$ and $b$, let $a \wedge b = \min \{a,b\}$ and $a \vee b =
\max \{a,b\}$.  Denote the natural numbers $1, 2, \dots$ by $\mathbb{N}$ and
the non-negative real numbers $[0,\infty)$ by $\R_+$.  The measure that puts
one unit of mass at $x \in \R_+$ is written $\delta_x$, and the meaure
$\delta_x^+$ is $\delta_x$ if $x > 0$ and the zero measure (denoted ${\bf 0}$)
if $x = 0$. 

Let ${\bf M}$ denote the Polish space of finite, nonnegative Borel measures on
$\R_+$, endowed with the weak topology \cite{Pro1956}.   A metric inducing
this topology is the following Prohorov metric on ${\bf M}$. For $A \subset
\R_+$, let $A^{\epsilon} = \{ y \in \R_+ : \inf_{x\in A} |x - y| < \epsilon
\}$.  Then for $\xi$, $\zeta \in {\bf M}$, define
\begin{multline} 
  {\bf d}[\xi, \zeta] =
  \inf\{\epsilon > 0: \xi(B) \le \zeta(B^\epsilon) + \epsilon
  \text{ and } \zeta(B) \le \xi(B^\epsilon) + \epsilon \\
  \text{  for all closed } B \subset \mathbb{R_+} \}.  \notag 
\end{multline}

For $\zeta \in {\bf M}$ and a $\zeta$-integrable function $g:\R_+ \to \R$,
define $\langle g, \zeta \rangle = \int_{\R_+} g(x)\zeta(dx)$.  Let $\chi(x) =
x$ be the identity function, and for any set $A$, let $1_A$ denote its
indicator function. For a real-valued function $f$, write $f^+$ and $f^-$ for
its positive and negative parts respectively. Unless otherwise specified, all
processes are assumed to be right continuous with finite left limits (rcll).
In particular, all measure-valued processes in this paper take values in the
Skorohod space $\mathbf{D}([0,\infty),\mathbf{M})$ endowed with the
$J_1$-topology.  Convergence in probability is denoted $\Par$,  weak
convergence of elements of $\mathbf{M}$ is denoted
$\overset{w}{\longrightarrow}$, and convergence in distribution of random
objects is denoted $\Rightarrow$.  We adopt the convention that a sum of the
form $\sum_{i = n}^m$ with $n>m$, or a sum over an empty set of indices equals
zero. Finally, we also define $\min\emptyset = \inf\emptyset = \infty.$

\section{The model: SJF and SRPT}
\label{sec:2}
This section lays out the precise definitions of our model, including the two
policies under consideration.  We will compare the SJF and SRPT policies
pathwise, using the same sample space $(\Omega,\mathcal{F},\mathbf{P})$ and
same stochastic primitives.  The stochastic primitives consist of an exogenous
arrival process, a sequence of service times, and an initial condition.

{\bf Stochastic primitives.} The arrival process $E(\cdot)$ is a rate $\alpha
\in (0, \infty)$ (possibly delayed) renewal process. For $t\ge0$, $E(t)$ is
the number of jobs that have arrived to the queue during $(0, t]$.  The
initial service times of these jobs are taken from an i.i.d.\ sequence
$\{w_j:j=1,2,\dots\}$ of strictly positive random variables, with
distribution $\nu$ having finite mean. Define the traffic intensity $\rho =
\alpha \langle \chi, \nu \rangle$.   

The initial condition is given by a non-negative integer-valued random
variable $Z_0$ with finite mean, and a sequence of strictly positive random
variables $\{w_j:j=\dots,-2,-1\}$. The random variable $Z_0$ represents
the number of jobs in the queue at time $0$, and their initial service times
are $\{w_{-Z_0},\dots,w_{-1}\}$. 

Define the random index set $J=\{j\in\mathbb{Z}:j\ge-Z_0\}\setminus\{0\}$ and,
for each $t\ge0$, define the random index set $J(t)=J\cap\{-Z_0,\dots,E(t)\}$.
Then $J$ indexes all jobs that will ever be in the system, and $J(t)$ indexes
all jobs that have been in the system by time $t$. Note that neither set ever
contains the index $0$, and that $J(t)=\varnothing$ if $Z_0=E(t)=0$. Note that
the present indexing differs slightly from that used in \cite{DoGrPu}, but
this does not affect our analysis. 

{\bf Workload.}  Define a measure-valued load process $\mathcal{V}(\cdot)$ and
a load process $V(\cdot)$ by 
\begin{align*} 
     \mathcal{V}(t)   &= \sum_{j = 1}^{E(t)} \delta_{w_j}, \\
     V(t)             &= \left \langle \chi, \mathcal{V}(t)\right\rangle,
\end{align*}	
for all $t\ge0$. Define the workload process $W(\cdot)$ and cumulative idle
time process $I(\cdot)$ by 
\begin{equation} \begin{aligned} 
     W(0)             &=  \sum_{j = -Z_0}^{-1} w_j, \\
     I(t)             &= \sup_{s \leq t} [W(0) + V(s) - s]^-,  \\
     W(t)             &= W(0) + V(t) - t + I(t).  \end{aligned} 
\label{Def1}   
\end{equation}

Next, we define the SJF and SRPT policies, as well as the corresponding state
descriptors that describe the evolution of the system under each policy.

{\bf Shortest Job First.} In order to define the residual service times under
SJF, we first define the succesive times $\{\gamma_n\}$ at which the server
begins processing a new job. Let $\gamma_0=y_0=j_0=0$ and define the set
$J_0=\varnothing$.  Now define inductively, for $n=1,2,\dots,$ 
\begin{equation}\label{gamma.y}
\begin{aligned}
  \gamma_n &=\inf\{t\ge \gamma_{n-1}+y_{n-1}:Z_0+E(t)\ge n\}, \\
  y_n	 &= \min\{w_j:j\in J(\gamma_n)\setminus J_{n-1}\}, \\
  j_n &  =\min\{j\in J(\gamma_n)\setminus J_{n-1}:w_j=y_n\}, \\
  J_n & = J_{n-1}\cup\{j_n\}.
\end{aligned}
\end{equation}
For each $n=1,2,\dots$, the time $\gamma_n$ is the $n$th time at which the
server begins processing a new job, $y_n$ is the service requirement of this
job, $j_n$ is its index, and $J_n$ is the set indexing all jobs that have
begun service by time $\gamma_n$. For each $j\in J$, let
\[
n(j)=\min\{n\in\mathbb{N}:j\in J_n\}.
\]
Then $n(j)$ is the overall order in which job $j$ is served, and the start
time of job $j$ may be defined as $\gamma_{n(j)}$ (with $\gamma_\infty$
interpreted as $\infty$, which may occur for example in a supercritical model
in which job $j$ never begins service).  In particular, note that
$y_{n(j)}=w_j$ for all $j\in J$ such that $\gamma_{n(j)}<\infty$.  At time
$t\ge0$, define the attained service of job $j\in J$ as
\[
s_j(t)=(t-\gamma_{n(j)})^+\wedge w_j,
\]
the cumulative service provided as
\[ S(t)=\sum_{j\in J(t)}s_j(t),
\]
and define the residual service time of job $j\in J$ as 
\[
w_j(t)=w_j-s_j(t). 
\]

We now define the state descriptor under SJF as the measure-valued process
\[ \mathcal{Z}(t)  = \sum_{j \in J(t)} \delta_{w_j(t)}^+, \quad
t\ge0. 
\]
Finally, for each $t\ge0$, let $F(t)$ denote the largest job started by the SJF
server by time $t$.  That is,  
\begin{equation*} F(t) = \max_{n \in \N}
  \left\{y_n: \gamma_n \leq t \right\}.
\end{equation*}

{\bf Shortest Remaining Processing Time.} For job $j\in J$, let $v_j(t)$
denote its residual service time at time $t$ when the server employs the SRPT
policy; we refer the reader to \cite{DoGrPu} for a detailed definition. Then
the measure-valued state descriptor under SRPT is given by
\[ \mathcal{Z}_p(t) = \sum_{j \in J(t)} \delta_{v_j(t)}^+, \quad t\ge0.
\]
Throughout the paper, the subscript $p$ for ``preemptive'' is used to
distinguish performance processes under SRPT from their analogues under SJF.
In \cite{DoGrPu}, the \emph{left edge process} of the state descriptor
$\mathcal{Z}_p(\cdot)$ is defined as 
\begin{equation} L_p(t) = \sup\{x \in \R_+: \langle 1_{[0, x)},
  \mathcal{Z}_p(t)\rangle = 0\},\quad t \ge0. \notag
\end{equation}
The current residual service time process is then defined, for all $t \ge 0$, by
\begin{equation} C_p(t) = 
                 \begin{cases}
                  L_p(t),           &\text{if $\mathcal{Z}_p(t) \neq {\bf 0}$}\\
                  0,                &\text{otherwise,}
                 \end{cases} \notag \end{equation} 
and the SRPT frontier process is defined by
\begin{equation} F_p(t) = \sup_{0\leq s\leq t} C_p(s), \quad t\ge0.
 \notag 
\end{equation}  

\section{Pathwise comparison}
\label{sec:3}
In this section, we establish several results that relate SJF to SRPT on a
pathwise basis. The main goal is to show that SJF and SRPT have the same
frontier process almost surely. This will provide the key element in the proof
of Theorem \ref{upshot} in Section \ref{sec:limits}.

First, we show that the sum of the residual service times under SJF, as
defined above, equals the workload defined in \eqref{Def1}.

\begin{lemma}\label{workload}
  Almost surely, for all $t\ge0$, $S(t)=t-I(t)$, and
  $W(t)=\sum_{j\in J(t)}w_j(t)$.
\end{lemma}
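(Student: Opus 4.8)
The plan is to derive the second identity from the first by a short computation, and to prove the first by recognizing $t-S(t)$ as the reflection term of a one-sided Skorokhod problem. We work throughout on the full-probability event on which $Z_0<\infty$, $E(\cdot)$ is finite on bounded sets, every $w_j>0$, and $\langle\chi,\nu\rangle<\infty$, so that all sums below are finite. For the reduction: $w_j(t)=w_j-s_j(t)$ and $S(t)=\sum_{j\in J(t)}s_j(t)$ give $\sum_{j\in J(t)}w_j(t)=\sum_{j\in J(t)}w_j-S(t)$, while $\sum_{j\in J(t)}w_j=\sum_{j=-Z_0}^{-1}w_j+\sum_{j=1}^{E(t)}w_j=W(0)+V(t)$; comparing with $W(t)=W(0)+V(t)-t+I(t)$, the identity $W(t)=\sum_{j\in J(t)}w_j(t)$ is equivalent to $S(t)=t-I(t)$. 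So it suffices to prove $S(t)=t-I(t)$ for all $t\ge0$.

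Set $\hat I(t)=t-S(t)$ and $\hat W(t)=W(0)+V(t)-t+\hat I(t)=\sum_{j\in J(t)}w_j(t)$. I will show that $(\hat W,\hat I)$ solves the Skorokhod reflection problem for the rcll netput $x(t):=W(0)+V(t)-t$, which satisfies $x(0)=W(0)\ge0$. First, $\hat W(t)\ge0$ since each $w_j(t)=w_j-\big((t-\gamma_{n(j)})^+\wedge w_j\big)\ge0$. Next, a short induction on $n$ from \eqref{gamma.y} gives, for $\gamma_n<\infty$: the minima defining $y_n$ and $j_n$ are over nonempty sets (because $J_{n-1}\subseteq J(\gamma_{n-1})\subseteq J(\gamma_n)$ and $|J(\gamma_n)|=Z_0+E(\gamma_n)\ge n>|J_{n-1}|$), $n(j_n)=n$ and $w_{j_n}=y_n$, the sequences $n\mapsto\gamma_n$ and $n\mapsto\gamma_n+y_n$ are nondecreasing with $\gamma_n\to\infty$, and the intervals $[\gamma_n,\gamma_n+y_n)$ are pairwise disjoint. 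Since $s_j(t)=0$ unless $j=j_n$ for some $n$ with $\gamma_n\le t$, and $j_n\in J(\gamma_n)\subseteq J(t)$ when $\gamma_n\le t$, it follows that $S(t)=\sum_{n\,:\,\gamma_n\le t}\big((t\wedge(\gamma_n+y_n))-\gamma_n\big)$, i.e.\ $S(t)$ is the Lebesgue measure of $[0,t]\cap\bigcup_n[\gamma_n,\gamma_n+y_n)$. Hence $S$ is nondecreasing and $1$-Lipschitz, so $\hat I(t)=t-S(t)=\int_0^t 1\{s\text{ is idle}\}\,ds$ is continuous, nondecreasing, and $\hat I(0)=0$.

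The remaining and main step is the complementarity condition: the SJF server is idle at $t$ only when $\hat W(t)=0$, equivalently $\int_0^\infty 1\{\hat W(s)>0\}\,d\hat I(s)=0$. Suppose $t$ is idle, i.e.\ $t\notin[\gamma_n,\gamma_n+y_n)$ for all $n$. If $t<\gamma_1$, then \eqref{gamma.y} forces $Z_0+E(t)=0$, so $J(t)=J\cap\{0\}=\varnothing$ and $\hat W(t)=0$. Otherwise, let $n\ge1$ be the largest index with $\gamma_n\le t$ (finite since $\gamma_n\to\infty$); since $t$ is idle and $t\ge\gamma_n$, we must have $t\ge\gamma_n+y_n$, hence $\gamma_n+y_n\le t<\gamma_{n+1}$. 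The definition of $\gamma_{n+1}$ then yields $Z_0+E(t)\le n$, so $|J(t)|\le n$; on the other hand $J_n=\{j_1,\dots,j_n\}\subseteq J(t)$ (each $j_k\in J(\gamma_k)\subseteq J(t)$, as $\gamma_k\le\gamma_n\le t$) and $|J_n|=n$, so $J(t)=J_n$. Finally, every element of $J_n$ equals $j_k$ for some $k\le n$, and $t\ge\gamma_n+y_n\ge\gamma_k+y_k=\gamma_k+w_{j_k}$, so $s_{j_k}(t)=(t-\gamma_k)^+\wedge w_{j_k}=w_{j_k}$ and $w_{j_k}(t)=0$; thus $\hat W(t)=\sum_{j\in J_n}w_j(t)=0$.

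By the Skorokhod Lemma, the reflection problem for $x$ has a unique solution, whose reflection term is $\sup_{s\le t}x(s)^-=\sup_{s\le t}[W(0)+V(s)-s]^-=I(t)$. Since $(\hat W,\hat I)$ is such a solution, $\hat I=I$, so $S(t)=t-I(t)$ for all $t\ge0$, and then $W(t)=x(t)+I(t)=x(t)+\hat I(t)=\hat W(t)=\sum_{j\in J(t)}w_j(t)$. The main obstacle is the combinatorial identification $J(t)=J_n$ on each idle interval, together with the preliminary induction showing that \eqref{gamma.y} is well posed and produces disjoint, correctly ordered service intervals with $\gamma_n\to\infty$; granted these, the reflection argument is routine.
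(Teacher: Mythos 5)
Your proof is correct, and it takes a recognizably different route from the paper's at the uniqueness step. The paper characterizes $S(\cdot)$ by a derivative condition at regular points ($\dot S(t)\in\{0,1\}$ with $\dot S(t)=0$ if and only if $S(t)=W(0)+V(t)$) and then asserts that this characterization, together with continuity, $S(0)=0$, and the upper bound $W(0)+V(\cdot)$, determines $S(\cdot)$ uniquely; it then checks that $t-I(t)$ satisfies the same characterization. You instead package the same underlying fact --- the server idles only when all arrived work has been completed, which in both arguments reduces to the combinatorial observation that on an idle interval $[\gamma_n+y_n,\gamma_{n+1})$ one has $Z_0+E(t)=n$ and every job in $J(t)=J_n$ has been fully served --- as the complementarity condition of a one-sided Skorokhod reflection problem for the netput $x(t)=W(0)+V(t)-t$, and conclude $t-S(t)=\sup_{s\le t}x(s)^-=I(t)$ from the Skorokhod lemma. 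What your version buys is a citable, fully standard uniqueness statement in place of the paper's somewhat informal ``this uniquely determines $S(\cdot)$,'' and it avoids any appeal to differentiability by working with the explicit representation of $S(t)$ as the Lebesgue measure of $[0,t]\cap\bigcup_n[\gamma_n,\gamma_n+y_n)$; the supporting facts you use (nonemptiness of the sets defining $y_n$, disjointness and monotonicity of the service intervals, $\gamma_n\to\infty$ on bounded sets via $Z_0+E(T)<\infty$) are all correctly justified. The paper's version is shorter because it reuses the well-known derivative characterization of $I(\cdot)$ rather than re-deriving the reflection map, but the two proofs rest on the same key combinatorial identity.
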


\noindent
\begin{proof} Clearly $S(t)\le W(0)+V(t)$ for all $t\ge0$. Recall that a
regular point of a function is a point at which it is differentiable.  We
first show that, for all regular points $t$, 
\begin{equation}
  \dot S(t)=0 \quad\text{if and only if}\quad S(t)=W(0)+V(t).  
  \label{flatwhen}
\end{equation}
Note that almost surely for all $j\in J$, the derivative $\dot
s_j(t)\in\{0,1\}$ for all regular points $t$ of $s_j(\cdot)$, and $\dot
s_j(t)=1$ if and only if $t\in(\gamma_{n(j)},\gamma_{n(j)}+y_{n(j)})$.
Further, the definitions above imply that the finite elements of $\{n(j):j\in
J\}$ are distinct and the time intervals
$\{(\gamma_n,\gamma_n+y_n):n\in\mathbb{N}\}$ are all disjoint. This implies
that $\dot S(t)\in\{0,1\}$ for all regular points $t$. (Note that if $t$ is a
regular point of $S(\cdot)$ but not of all $s_j(\cdot)$, $j\in J(t)$, then we
must have $\dot S(t)=1$ anyway). Moreover, $\dot S(t)=0$ if and only if
$t<\gamma_1$ or $t\in (\gamma_n+y_n,\gamma_{n+1})$ for some $n\in\mathbb{N}$.
If $t<\gamma_1$, then $S(t)=0=W(0)+V(t)$, and the converse is also true. If
$t\in(\gamma_n+y_n,\gamma_{n+1})$ for some $n\in\mathbb{N}$, then
\eqref{gamma.y} implies $Z_0+E(t)=n$, which implies that $s_j(t)=w_j$ for all
$j\in J(t)$. Thus, $S(t)=W(0)+V(t)$, and the converse is also true. This
proves \eqref{flatwhen}. 

Observe that almost surely, $S(\cdot)$ is continuous, starts at zero, has
derivative in $\{0,1\}$ at regular points, is bounded above by $W(0)+V(\cdot)$,
and satisfies \eqref{flatwhen} at all regular points $t$. Since
$W(0)+V(\cdot)$ is right-continuous and piecewise constant, this uniquely
determines $S(\cdot)$ almost surely.
 
As is well known, $\dot I(t)\in\{0,1\}$ at all regular points $t$, and
$\dot I(t)=1$ if and only if $I(t)=(W(0)+V(t)-t)^-$ and $I(t)>0$. So
$\frac{d}{dt}(t-I(t))=0$ if and only if $t-I(t)=W(0)+V(t)$. Also, $t-I(t)\le
W(0)+V(t)$ for all $t\ge0$. Since $\cdot-I(\cdot)$ is continuous and starts
from zero, it is also uniquely determined by $W(0)+V(\cdot)$ almost surely. We
conclude that $S(t)=t-I(t)$ for all $t\ge0$ almost surely.

The second part follows directly from the first part and equation \eqref{Def1}, since
\[ 
W(t)=W(0)+V(t)-t+I(t)=\sum_{j\in J(t)}w_j
	-S(t)=\sum_{j\in J(t)}w_j(t).
\]
\end{proof}
Note that by Lemma \ref{workload}, for all $t\ge0$,
\begin{equation}
  W(t)=\langle\chi,\mathcal{Z}(t)\rangle.
  \label{workload-chi}
\end{equation}

Next, we prove a result describing the workload at or above the frontier under
SJF, at certain random times.  To that end, define the random set 
\[
	\mathcal{J}=\{j\in J: \gamma_{n(j)}<\infty\text{ and } 
		w_j=F(\gamma_{n(j)})\}.
\]
These are frontier jobs, with service time equal to the frontier when they are
started.  The set of start times of frontier jobs is
$\mathcal{T}=\{\gamma_{n(j)}:j\in\mathcal{J}\}$. Let
$\{\tau_k:k\in\mathbb{N}\cap[0,|\mathcal{J}|]\}$ be the ordering of
$\mathcal{T}$ such that $\tau_k<\tau_{k+1}$ for all $k<|\mathcal{J}|$.  Note
that $\mathcal{J}$ is an infinite set in most settings of interest. But it can
happen that $|\mathcal{J}|<\infty$, for example when $\rho>1$ or when there
are initial jobs larger than the supremum of the support of $\nu$.

\begin{lemma}\label{interStartTimes-n} (i) Almost surely, for all $k \in
  \mathbb{N}$ with $k \le|\mathcal{J}|$, 
\begin{equation}
\langle \chi 1_{[F(\tau_k),\infty)},\mathcal{Z}(\tau_k)\rangle=W(\tau_k).
  \label{WisAbove}
\end{equation}
(ii) Almost surely, for all $k < |\mathcal{J}|$ and $t\in(\tau_k,\tau_{k+1}],$
  \begin{equation}
  \langle\chi1_{[F(\tau_k),\infty)},\mathcal{Z}(t)\rangle
  =W(\tau_k) -F(\tau_k) +  \sum_{j=E(\tau_k)+1}^{E(t)}w_j1_{\{w_j\ge
			F(\tau_k)\}}.
	\label{WaboveEqn}
	\end{equation}
	Moreover, if $k=|\mathcal{J}|<\infty$ and $t\in(\tau_k,\infty)$, then
	\eqref{WaboveEqn} holds as well.
\end{lemma}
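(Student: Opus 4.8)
The plan is to prove both parts by direct accounting on the SJF sample path, leaning on two elementary structural facts. First, since the service times $y_n$ are strictly positive, the start times $\{\gamma_n\}$ are strictly increasing, job $j_m$ completes at $\gamma_m+y_m\le\gamma_{m+1}$, and hence $F(\gamma_m)=\max\{y_{m'}:m'\le m\}$; in particular $j_m\in\mathcal{J}$ exactly when $y_m$ is a weak running maximum of $(y_{m'})_{m'}$. Second, at any fixed time $t$ every job in $J(t)$ is in exactly one of three states: completed (residual $0$), not yet started (residual equal to its initial size), or the single job currently in service.

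For part (i), write $\tau_k=\gamma_{m_k}$ for the start time of the relevant frontier job $j_{m_k}$, so that $w_{j_{m_k}}=y_{m_k}=F(\tau_k)$ by the definition of $\mathcal{J}$. At $\tau_k$ the job $j_{m_k}$ has attained service $0$ and thus residual $F(\tau_k)$; every job started before $\tau_k$ has completed (its start time $\gamma_m$ satisfies $\gamma_m+y_m\le\gamma_{m_k}=\tau_k$), contributing $0$; and every job of $J(\tau_k)$ not yet started belongs to $J(\gamma_{m_k})\setminus J_{m_k-1}$, so by the second line of \eqref{gamma.y} its initial-and-residual size is at least $y_{m_k}=F(\tau_k)$. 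Hence $\mathcal{Z}(\tau_k)$ is supported on $[F(\tau_k),\infty)$, giving
\[
\langle\chi 1_{[F(\tau_k),\infty)},\mathcal{Z}(\tau_k)\rangle=\langle\chi,\mathcal{Z}(\tau_k)\rangle=W(\tau_k)
\]
by \eqref{workload-chi}, which is \eqref{WisAbove}.

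For part (ii), set $\tau_k=\gamma_{m_k}$, $\tau_{k+1}=\gamma_{m_{k+1}}$. The crucial step is to show $y_m<F(\tau_k)$ for every $m_k<m<m_{k+1}$: since each such $j_m\notin\mathcal{J}$, i.e.\ $y_m<\max\{y_{m'}:m'\le m\}$, an induction using $F(\gamma_m)=\max\{y_{m'}:m'\le m\}$ gives $\max\{y_{m'}:m'\le m\}=y_{m_k}=F(\tau_k)$ with $y_m<F(\tau_k)$ throughout this range. Thus $F\equiv F(\tau_k)$ on $[\tau_k,\tau_{k+1})$, every job the server begins in $(\tau_k,\tau_{k+1})$ has initial size strictly below $F(\tau_k)$, and $j_{m_k}$ completes at $\tau_k+F(\tau_k)\le\gamma_{m_k+1}\le\tau_{k+1}$. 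Now fix $t\in(\tau_k,\tau_{k+1}]$ and split $J(t)=J(\tau_k)\cup\{E(\tau_k)+1,\dots,E(t)\}$. A new arrival $j$ in the second set that has begun service by $t$ did so at some $\gamma_m$ with $m_k<m\le m_{k+1}$: if $m<m_{k+1}$ its residual is $\le w_j=y_m<F(\tau_k)$, while if $m=m_{k+1}$ (possible only when $t=\tau_{k+1}$) its residual equals $w_j=F(\tau_{k+1})\ge F(\tau_k)$; if it has not begun service its residual is $w_j$. In every case its contribution to $\langle\chi 1_{[F(\tau_k),\infty)},\mathcal{Z}(t)\rangle$ is $w_j1_{\{w_j\ge F(\tau_k)\}}$, which assembles the sum in \eqref{WaboveEqn}. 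Among the jobs of $J(\tau_k)$: $j_{m_k}$ is in service or completed at $t>\tau_k$, so has residual $<F(\tau_k)$ and contributes $0$; a job completed by $\tau_k$ contributes $0$; and a job $j'$ of $J(\tau_k)$ unstarted at $\tau_k$ has $w_{j'}\ge F(\tau_k)$ (as in the proof of (i)), hence is not one of the strictly-smaller jobs begun in $(\tau_k,\tau_{k+1})$, so at time $t$ it is still unstarted (or is $j_{m_{k+1}}$, just begun at $t=\tau_{k+1}$) with residual $w_{j'}$ and contributes $w_{j'}$. By (i), these contributions sum to $\langle\chi 1_{[F(\tau_k),\infty)},\mathcal{Z}(\tau_k)\rangle-F(\tau_k)=W(\tau_k)-F(\tau_k)$, and adding the two pieces gives \eqref{WaboveEqn}. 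When $k=|\mathcal{J}|<\infty$ there is no frontier job started after $\tau_k$, so the same induction yields $F\equiv F(\tau_k)$ on $[\tau_k,\infty)$ with every job begun after $\tau_k$ of initial size below $F(\tau_k)$, and the identical accounting produces \eqref{WaboveEqn} for all $t\in(\tau_k,\infty)$.

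The only genuinely delicate point is the running-maximum induction in part (ii): extracting from the definition of $\mathcal{J}$ and \eqref{gamma.y} that the frontier is constant between consecutive frontier-start times and that the server serves only strictly-sub-frontier jobs in between, together with the careful treatment of the right endpoint $t=\tau_{k+1}$ (and of jobs that never begin service when $|\mathcal{J}|<\infty$). The remainder is deterministic bookkeeping, with Lemma \ref{workload} — used through \eqref{workload-chi} — the sole almost-sure ingredient.
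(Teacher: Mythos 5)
Your proof is correct and follows essentially the same route as the paper: part (i) is the identical three-way classification of jobs at $\tau_k$ (completed, in service at the frontier, or unstarted with size at least $y_{n(j_0)}$), and part (ii) rests on the same key fact that $F$ is constant on $[\tau_k,\tau_{k+1})$ with only strictly sub-frontier jobs started in between — which the paper asserts from the structure of $F(\cdot)$ and $\mathcal{T}$ and you verify by an explicit running-maximum induction. The only cosmetic difference is that you tally contributions at time $t$ directly rather than comparing each term $w_j(t)1_{\{w_j(t)\ge F(\tau_k)\}}$ with its value at $\tau_k$ as the paper does; the case analysis and the treatment of the endpoint $t=\tau_{k+1}$ and of the $k=|\mathcal{J}|<\infty$ case are the same.
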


\begin{proof} (i) Fix $k\in \mathbb{N}\cap[0, |\mathcal{J}|]$.  By definition, $\tau_k=\gamma_{n(j_0)}$
for exactly one $j_0\in\mathcal{J}$, and
$F(\tau_k)=F(\gamma_{n(j_0)})=w_{j_0}$.  Consider any $j\in J(\tau_k)$. If
$\gamma_{n(j)}<\tau_k$, then \eqref{gamma.y} implies that
$\gamma_{n(j)}+y_{n(j)}\le\gamma_{n(j_0)}$, and so $w_j(\tau_k)=0$. On the
other hand, if $\gamma_{n(j)}\ge\tau_k$, then $w_j(\tau_k)=w_j$ and
\eqref{gamma.y} implies that $j\in J(\gamma_{n(j_0)})\setminus J_{n(j_0)-1}$.
In particular, $w_j\ge y_{n(j_0)}=w_{j_0}$. So at time $\tau_k$, all arrived
jobs have residual service time either equal to zero or bounded below by
$F(\tau_k)$. This implies that the first right-hand term in
\begin{equation*}
  W(\tau_k)=\langle\chi 1_{[0,F(\tau_k))},\mathcal{Z}(\tau_k)\rangle
  +\langle \chi 1_{[F(\tau_k),\infty)},\mathcal{Z}(\tau_k)\rangle
\end{equation*}
equals zero and proves \eqref{WisAbove}.
\newline \noindent (ii) To prove \eqref{WaboveEqn}, fix $k < |\mathcal{J}|$ and $t\in(\tau_k,\tau_{k+1}]$.  Then
\begin{equation}\label{interStartTimes-n1} 
  \langle\chi1_{[F(\tau_k),\infty)},\mathcal{Z}(t)\rangle
  = \sum_{j\in J(t)}w_j(t)1_{\{w_j(t)\ge F(\tau_k)\}}.
\end{equation}
Observe that $w_{j_0}(t)<w_{j_0}=F(\tau_k)$,
because $t>\gamma_{n(j_0)}$ and $\gamma_{n(j_0)}\in\mathcal{T}$. So
\begin{equation}
  w_{j_0}(t)1_{\{w_{j_0}(t)\ge F(\tau_k)\}}=0,
  \label{jnought1}
\end{equation}
while
\begin{equation}
  \label{jnought2}
  w_{j_0}(\tau_k)1_{\{w_{j_0}(\tau_k)\ge F(\tau_k)\}}=F(\tau_k).
\end{equation}

Now consider $j\in J(t)\setminus j_0$. We will show that 
\begin{equation}\label{jnoughtToShow-a}
	w_j(t)1_{\{w_j(t)\ge F(\tau_k)\}} = w_j(\tau_k)1_{\{w_j(\tau_k)\ge F(\tau_k)\}}. 
\end{equation}

Case one: If $\gamma_{n(j)} < \tau_k$, then $w_{j}(\tau_k) = 0$ by
\eqref{gamma.y}.  Since $w_{j}(\cdot)$ is nonincreasing and $t > \tau_k,
w_j(t) = 0$ as well and so \eqref{jnoughtToShow-a} holds.  

Case two: If $\gamma_{n(j)}\in (\tau_k,\tau_{k+1})$, then $j \notin
\mathcal{J}$ and so $w_j<F(\gamma_{n(j)})$. Since $F(\cdot)$ is
right-continuous, piecewise constant, and can only jump at (a subset of) times
$\tau\in\mathcal{T}$, this implies $w_j < F(\tau_k).$  Then $1_{\{w_j(t) \ge
F(\tau_k)\}} =  1_{\{w_j(\tau_k) \ge F(\tau_k)\}} =0$ and equation
\eqref{jnoughtToShow-a} holds.

Case three: If $\gamma_{n(j)} \ge \tau_{k+1}$, then $\gamma_{n(j)} \ge t$ and
so $s_j(t) = 0$, that is, $w_j(t) = w_j(\tau_k)$ and equation
\eqref{jnoughtToShow-a} holds.  

This proves \eqref{jnoughtToShow-a}.  Combining \eqref{interStartTimes-n1}
with \eqref{jnought1} and \eqref{jnoughtToShow-a} and noting that if $j >
E(\tau_k)$ then $w_j(\tau_k) = w_j,$ we obtain
\begin{equation*}
  \langle\chi1_{[F(\tau_k),\infty)},\mathcal{Z}(t)\rangle
  = \sum_{j\in J(\tau_k)\setminus j_0}w_j(\tau_k)1_{\{w_j(\tau_k)\ge F(\tau_k)\}}
  +\sum_{j=E(\tau_k)+1}^{E(t)}w_j1_{\{w_j\ge F(\tau_k)\}}.
\end{equation*}
By \eqref{jnought2}, including the $j_0$-term in the first sum is compensated by
subtracting $F(\tau_k)$, which yields 
\begin{equation*}
  \langle\chi1_{[F(\tau_k),\infty)},\mathcal{Z}(t)\rangle
  = \langle\chi 1_{[F(\tau_k),\infty)},\mathcal{Z}(\tau_k)\rangle -F(\tau_k)
  +\sum_{j=E(\tau_k)+1}^{E(t)}w_j1_{\{w_j\ge F(\tau_k)\}}.
\end{equation*}
Applying \eqref{WisAbove} proves \eqref{WaboveEqn}.

It remains to show \eqref{WaboveEqn} when $k=|\mathcal{J}|<\infty$ and
$t\in(\tau_k,\infty)$. The argument is identical except for Cases two and
three in the proof of \eqref{jnoughtToShow-a}. Case two becomes
$\gamma_{n(j)}\in (\tau_k,\infty)$. Then $j\notin\mathcal{J}$ and
$w_j<F(\gamma_{n(j)})$. Since $F(\cdot)$ is constant on $[\tau_k,\infty)$,
$w_j<F(\tau_k)$ and \eqref{jnoughtToShow-a} follows as before. Case three
becomes $\gamma_{n(j)}=\infty$. Then $w_j(\tau_k)=w_j(t)=w_j$, which implies
\eqref{jnoughtToShow-a}. The remainder of the proof follows from
\eqref{interStartTimes-n1}--\eqref{jnoughtToShow-a} as before.  \end{proof}

We need a similar result for SRPT, proved in the next two lemmas. For each
$j\in J$, the SRPT start time of job $j$ is defined $\eta_j=\inf\{t\ge0:\dot
v_j(t)=-1\}$.  Let 
\[
\mathcal{J}_p=\{j\in J:\eta_j<\infty \text{ and } w_j=F_p(\eta_j)\},
\]
let $\mathcal{T}_p=\{\eta_j:j\in\mathcal{J}_p\}$, and let
$\{\sigma_k:k\in\mathbb{N}\cap[0,|\mathcal{J}_p|]\}$ be the ordering of
$\mathcal{T}_p$ such that $\sigma_k<\sigma_{k+1}$ for all $k<|\mathcal{J}_p|$.

\begin{lemma}\label{CF}
  Almost surely, (i) if $t\in\mathcal{T}_p,$ then $C_p(t)=F_p(t)$, and
(ii) if $C_p(t)=F_p(t) >0$, then $t\in\mathcal{T}_p$.
\end{lemma}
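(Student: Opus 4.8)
The plan is to reduce both implications to two standard pathwise properties of the SRPT residual processes $v_j(\cdot)$, which I would quote from \cite{DoGrPu}: (a) for each $j\in J$, $v_j(\cdot)$ equals $w_j$ from the arrival of job $j$ up to its start time $\eta_j$ and is continuous and nonincreasing afterwards, strictly decreasing on a nonempty right neighbourhood of $\eta_j$ whenever $\eta_j<\infty$; and (b) whenever $\mathcal{Z}_p(t)\neq\mathbf{0}$ the server is working on a unique job $j$ whose residual $v_j(t)$ is the minimum of all residuals present, hence, being strictly positive, equal to $L_p(t)$, and at its own start time $v_j(\eta_j)=w_j$, so $C_p(\eta_j)=w_j$. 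I would also record the two easy consequences that $C_p(t)\le F_p(t)$ for all $t$ and that $F_p$ is nondecreasing.

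For part (i), given $t=\eta_j$ with $j\in\mathcal{J}_p$ (so $w_j=F_p(\eta_j)=F_p(t)$), I would observe that job $j$ begins service at $t$, so $v_j(t)=w_j>0$; hence $\mathcal{Z}_p(t)\neq\mathbf{0}$ and, by property (b), $L_p(t)=v_j(t)=w_j$. Therefore $C_p(t)=L_p(t)=w_j=F_p(t)$, as required.

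For part (ii), given $C_p(t)=F_p(t)>0$, I would first note that $C_p(t)>0$ forces $\mathcal{Z}_p(t)\neq\mathbf{0}$ and $C_p(t)=L_p(t)$, and then let $j$ be the job in service at time $t$, so that $v_j(t)=L_p(t)=F_p(t)$ and $\eta_j\le t$. A squeeze then finishes it: $v_j(t)\le w_j$ trivially, while $F_p(t)\ge F_p(\eta_j)\ge C_p(\eta_j)=v_j(\eta_j)=w_j$ by property (b) and monotonicity of $F_p$, so $w_j= v_j(t)=F_p(t)$ and job $j$ has received no service by time $t$. Property (a) then forces $\eta_j=t$ (otherwise $v_j(t)<w_j$), and consequently $w_j=F_p(t)=F_p(\eta_j)$, i.e.\ $j\in\mathcal{J}_p$ and $t=\eta_j\in\mathcal{T}_p$.

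I expect the only genuine obstacle to be the first paragraph: making sure the cited SRPT facts are stated in a form matching \cite{DoGrPu}, in particular that each finite $\eta_j$ is attained as an actual start instant, that residuals are continuous after arrival, and that the in-service job is exactly the minimal-residual job. Once those facts are available, both parts collapse to the forced equality $w_j\le F_p(t)\le w_j$ at the relevant time together with direct unwinding of the definitions of $L_p$, $C_p$ and $F_p$; no estimates or limiting arguments are involved.
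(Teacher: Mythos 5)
Your proof is correct; part (i) is essentially the paper's argument (show the started job has the minimal residual, so $C_p(t)=L_p(t)=w_j=F_p(t)$), but your part (ii) takes a genuinely different route. The paper first manufactures an \emph{unserved} job sitting exactly at the left edge: it combines Lemma 5.3(i) of \cite{DoGrPu} (no mass below $C_p(t)$) with Lemma 5.3(ii) (any job with $w_j>F_p(t)$ is untouched on $[0,t]$) to produce $j^*$ with $v_{j^*}(t)=w_{j^*}=C_p(t)$, and then argues \emph{forward} in time: almost surely no arrivals occur in $(t,t+\epsilon]$, so SRPT serves $j^*$ there and $\eta_{j^*}=t$. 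You instead take the job $j$ currently in service at $t$ and argue \emph{backward} via the squeeze $w_j=C_p(\eta_j)\le F_p(\eta_j)\le F_p(t)=v_j(t)\le w_j$, forcing $v_j(t)=w_j$ and hence $\eta_j=t$. Your version dispenses with Lemma 5.3(ii) and with the no-arrivals-in-a-right-neighbourhood step, at the cost of leaning harder on the regularity facts packaged in your property (b) — specifically that whenever $\eta_j<\infty$ the job served at the instant $\eta_j$ is $j$ itself and its residual realizes $C_p(\eta_j)=w_j$. That is a right-continuity statement about the SRPT service allocation (switch points form a discrete set, so "served at $s_n\downarrow\eta_j$" implies "served at $\eta_j+$"), and it is available from the pathwise construction in \cite{DoGrPu}; with it in hand, both implications indeed reduce to the forced equalities you describe, so the argument is complete.
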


\begin{proof} 
Fix $t \in \mathcal{T}_p$, so $t = \eta_j$ for some $j \in
\mathcal{J}_p.$  Then $w_j = F_p(\eta_j).$ Evidently $v_j(t) =
w_j > 0$, so $\mathcal{Z}_p(t) \neq {\bf 0}$, and therefore $C_p(t) =
L_p(t).$ Now suppose $\langle 1_{[0, w_j)}, \mathcal{Z}_p(t)\rangle > 0.$
Then there is at least one job in the queue at time $t$ with residual service
time $y\in(0, w_j)$, contradicting the assumption that SRPT starts job
$j$ at $t$.  This shows that $C_p(t) \geq F_p(t)$.  Since $F_p(t) \ge
C_p(t)$ by definition, this establishes (i).

For (ii), fix $t$ such that $C_p(t)=F_p(t) >0$. By \cite{DoGrPu} Lemma 5.3(i),
$\langle 1_{[0, C_p(t))}, \mathcal{Z}_p(t)\rangle = 0$ almost surely.
Additionally, noting differences in the indexing of jobs, \cite{DoGrPu} Lemma
5.3(ii) implies that for all $j \in J$ such that $w_j>F_p(t)$, $v_j(s) = w_j$
for all $s \in [0, t]$. Since  $C_p(t) > 0$, $\mathcal{Z}_p(t) \neq {\bf 0}$
and so $\langle 1_{[C_p(t), \infty)}, \mathcal{Z}_p(t)\rangle >0$.  Thus the
set $\{j \in J(t): C_p(t)= v_j(t) = w_j\}$ is nonempty.  Let $j^*$ be the
smallest element of this set. Almost surely, there exists
$\epsilon\in(0,C_p(t))$ such that $E(t + \epsilon) = E(t)$, that is, no new
jobs arrive during $(t, t + \epsilon]$.  Then $\frac{d}{ds}v_{j^*}(s)=0$ on
$[0,t)$ and $\frac{d}{ds}v_{j^*}(s) = -1$ on $(t, t + \epsilon)$, so
$\eta_{j^*}=\inf \{s \geq 0 : \dot{v}_{j^*}(s) = -1\} = t$.  Thus $w_{j^*} =
F_p(t) = F_p(\eta_{j^*})$ and hence $j^* \in \mathcal{J}_p$ and $t
\in \mathcal{T}_p$.  \end{proof}

\begin{lemma}\label{interStartTimes-p}
  (i) Almost surely, for all $k \in \mathbb{N}$ with $k \le|\mathcal{J}_p|$, 
  \begin{equation}\label{pWisAbove}
  \langle\chi1_{[F_p(\sigma_k),\infty)},\mathcal{Z}_p(\sigma_k)\rangle
  =W(\sigma_k).
	\end{equation}
  (ii) Almost surely, for all $k<|\mathcal{J}_p|$ and $t\in(\sigma_k,\sigma_{k+1}]$,
\begin{equation}\label{pWaboveEqn}
  \langle\chi1_{[F_p(\sigma_k),\infty)},\mathcal{Z}_p(t)\rangle
  =W(\sigma_k) -F_p(\sigma_k) +  \sum_{j=E(\sigma_k)+1}^{E(t)}w_j1_{\{w_j\ge
			F_p(\sigma_k)\}}.
	\end{equation}
	Moreover, if $k=|\mathcal{J}_p|<\infty$ and $t\in(\sigma_k,\infty)$, then
	\eqref{pWaboveEqn} holds as well.
\end{lemma}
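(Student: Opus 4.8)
The plan is to follow the proof of Lemma~\ref{interStartTimes-n} step for step, substituting for the explicit SJF start-time construction its preemptive counterparts: Lemma~\ref{CF} above, Lemma~5.3 of \cite{DoGrPu}, and the SRPT analogue of \eqref{workload-chi}, namely $\langle\chi,\mathcal{Z}_p(t)\rangle=W(t)$ for all $t\ge0$ (valid because SRPT is work-conserving; see \cite{DoGrPu}). For part~(i), fix $k\in\mathbb{N}$ with $k\le|\mathcal{J}_p|$, so $\sigma_k\in\mathcal{T}_p$. By Lemma~\ref{CF}(i), $C_p(\sigma_k)=F_p(\sigma_k)$, and by Lemma~5.3(i) of \cite{DoGrPu} (as already invoked in the proof of Lemma~\ref{CF}), $\langle 1_{[0,C_p(\sigma_k))},\mathcal{Z}_p(\sigma_k)\rangle=0$, whence $\langle\chi 1_{[0,F_p(\sigma_k))},\mathcal{Z}_p(\sigma_k)\rangle=0$. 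Splitting $W(\sigma_k)=\langle\chi,\mathcal{Z}_p(\sigma_k)\rangle$ at the level $F_p(\sigma_k)$ then yields \eqref{pWisAbove}.

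For part~(ii), the essential first step is to show that $F_p(\cdot)$ is constant, equal to $F_p(\sigma_k)$, on $[\sigma_k,\sigma_{k+1})$ (and on $[\sigma_k,\infty)$ when $k=|\mathcal{J}_p|<\infty$). Since $F_p(\cdot)=\sup_{s\le\cdot}C_p(s)$ is the running supremum of the rcll process $C_p(\cdot)$, it can increase above the level $F_p(\sigma_k)$ only at a time $s$ at which $C_p(s)=F_p(s)$, and then $F_p(s)>F_p(\sigma_k)=w_{j^*}>0$, where $j^*\in\mathcal{J}_p$ is the job with $\eta_{j^*}=\sigma_k$; by Lemma~\ref{CF}(ii) such a time $s$ lies in $\mathcal{T}_p$, which is impossible for $s\in(\sigma_k,\sigma_{k+1})$ (and, in the terminal case, for $s>\sigma_k$). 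Granting this flatness, one has $C_p(s)<F_p(\sigma_k)$ for every $s\in(\sigma_k,\sigma_{k+1})$: indeed $C_p(s)\le F_p(s)=F_p(\sigma_k)$, while $C_p(s)=F_p(s)=F_p(\sigma_k)>0$ is again excluded by Lemma~\ref{CF}(ii). Since under SRPT the job in service (when one exists) has residual service time $C_p(\cdot)$, it follows that during $(\sigma_k,\sigma_{k+1})$ no job with residual service time at least $F_p(\sigma_k)$ is ever served.

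With this structural fact in hand I would carry out the bookkeeping exactly as in the proof of Lemma~\ref{interStartTimes-n}(ii). Fix $t\in(\sigma_k,\sigma_{k+1}]$ and write $\langle\chi 1_{[F_p(\sigma_k),\infty)},\mathcal{Z}_p(t)\rangle=\sum_{j\in J(t)}v_j(t)1_{\{v_j(t)\ge F_p(\sigma_k)\}}$. The job $j^*$ has $v_{j^*}(\sigma_k)=w_{j^*}=F_p(\sigma_k)$, and since $\eta_{j^*}=\sigma_k$ while $v_{j^*}$ is nonincreasing, $v_{j^*}(t)<F_p(\sigma_k)$ for all $t>\sigma_k$; so the $j^*$-term contributes $F_p(\sigma_k)$ at time $\sigma_k$ and $0$ at time $t$. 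For any other $j\in J(\sigma_k)$, Lemma~5.3(i) of \cite{DoGrPu} applied at $\sigma_k$ gives $v_j(\sigma_k)=0$ or $v_j(\sigma_k)\ge F_p(\sigma_k)$; a routine bootstrap using the no-service fact, the monotonicity of $v_j$, and the continuity of residual service times then gives $v_j(t)=v_j(\sigma_k)$ in the second case and $v_j(t)=0$ in the first, so the $j$-term is unchanged between $\sigma_k$ and $t$. For $j$ arriving during $(\sigma_k,t]$, the same bootstrap (together with $C_p<F_p(\sigma_k)$ on the interval) gives $v_j(t)=w_j$, so the $j$-term equals $w_j1_{\{w_j\ge F_p(\sigma_k)\}}$. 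Summing and reinstating the $j^*$-term in the time-$\sigma_k$ sum at the cost of $-F_p(\sigma_k)$ yields $\langle\chi 1_{[F_p(\sigma_k),\infty)},\mathcal{Z}_p(t)\rangle=\langle\chi 1_{[F_p(\sigma_k),\infty)},\mathcal{Z}_p(\sigma_k)\rangle-F_p(\sigma_k)+\sum_{j=E(\sigma_k)+1}^{E(t)}w_j1_{\{w_j\ge F_p(\sigma_k)\}}$, and \eqref{pWisAbove} rewrites the first right-hand term as $W(\sigma_k)$, which is \eqref{pWaboveEqn}. The terminal case $k=|\mathcal{J}_p|<\infty$, $t\in(\sigma_k,\infty)$, is identical: there $F_p$ is constant on all of $[\sigma_k,\infty)$, hence $C_p<F_p(\sigma_k)$ on $(\sigma_k,\infty)$, and the same classification of jobs applies with $\sigma_{k+1}$ replaced by $\infty$.

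The main obstacle is the structural claim of the third paragraph — that $F_p$ is flat between consecutive elements of $\mathcal{T}_p$ and that no job with residual service time at least $F_p(\sigma_k)$ is served on the corresponding open interval. This is exactly the point at which the preemptive nature of SRPT makes the argument genuinely different from the SJF case treated in Lemma~\ref{interStartTimes-n} (there is no analogue of ``a job, once started, runs to completion before the next job begins''), and it is where Lemma~\ref{CF} and Lemma~5.3 of \cite{DoGrPu} do the real work. Once that fact is available, the remaining steps are routine accounting that mirrors Lemma~\ref{interStartTimes-n}(ii).
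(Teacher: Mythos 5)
Your proof is correct and follows essentially the same route as the paper's: part (i) is identical (Lemma~\ref{CF} plus Lemma 5.3(i) of \cite{DoGrPu}), and part (ii) splits off the job started at $\sigma_k$ and shows all other terms of the sum are unchanged between $\sigma_k$ and $t$, with your explicit justification via Lemma~\ref{CF}(ii) that $F_p(\cdot)$ is flat on $[\sigma_k,\sigma_{k+1})$ filling in a step the paper merely asserts. One small imprecision: the blanket claim $v_j(t)=w_j$ for jobs arriving in $(\sigma_k,t]$ is false when $w_j<F_p(\sigma_k)$ (such jobs may well be served on the interval), but the term identity you actually need still holds there because both indicators vanish.
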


\begin{proof} 
(i) Fix $k\in \mathbb{N}\cap[0, |\mathcal{J}_p|]$.  By \cite{DoGrPu} Lemma 5.3 (i), $\langle 1_{[0, C_p(t))},
\mathcal{Z}_p(t)\rangle = 0$ for all $t$ almost surely.  Since $\sigma_k \in
\mathcal{T}_p$, $F_p(\sigma_k) = C_p(\sigma_k)$ by Lemma $\ref{CF}$, and it
follows that  $\langle 1_{[0, F_p(\sigma_k))},\mathcal{Z}_p(\sigma_k)\rangle =
0.$ This implies that the first right-hand term in
\begin{equation*}
  W(\sigma_k)=\langle\chi 1_{[0,F_p(\sigma_k))},\mathcal{Z}_p(\sigma_k)\rangle
  +\langle \chi 1_{[F_p(\sigma_k),\infty)},\mathcal{Z}_p(\sigma_k)\rangle
\end{equation*}
equals zero and proves \eqref{pWisAbove}.
\newline\noindent (ii) To prove \eqref{pWaboveEqn}, fix $k < |\mathcal{J}_p|$ and fix  $t\in(\sigma_k,\sigma_{k+1}]$.  Then
\begin{equation}\label{interStartTimes-p1} 
  \langle\chi1_{[F_p(\sigma_k),\infty)},\mathcal{Z}_p(t)\rangle
  = \sum_{j\in J(t)}v_j(t)1_{\{v_j(t)\ge F_p(\sigma_k)\}}.
\end{equation}
Let $j_0\in\mathcal{J}_p$ be the job started at $\sigma_k$. Then
$w_{j_0}=F_p(\sigma_k)$ and $v_{j_0}(t)<w_{j_0}$ since $t>
\sigma_k=\eta_{j_0}$. Thus,
\begin{equation}
  v_{j_0}(t)1_{\{v_{j_0}(t)\ge F_p(\sigma_k)\}}=0,
  \label{jnought1a}
\end{equation}
while
\begin{equation}
  \label{jnought2a}
  v_{j_0}(\sigma_k)1_{\{v_{j_0}(\sigma_k)\ge F_p(\sigma_k)\}}=F_p(\sigma_k).
\end{equation}

  Now consider $j\in J(t)\setminus j_0$. We will show that 
\begin{equation}\label{jnoughtToShow}
	v_j(t)1_{\{v_j(t)\ge F_p(\sigma_k)\}} =
      v_j(\sigma_k)1_{\{v_j(\sigma_k)\ge F_p(\sigma_k)\}}.
\end{equation}

Case one: If $\eta_j < \sigma_k$, then $\dot{v}_j(t) = -1$ on $(\eta_j, \eta_j
+ \epsilon)$ for some $\epsilon > 0$, so $v_j(\sigma_k) < w_j \le F_p(\eta_j)
\le F_p(\sigma_k).$  Since $v_j(\cdot)$ is nonincreasing and $t > \sigma_k$,
we have $1_{\{v_j(t)\ge F_p(\sigma_k)\}} = 1_{\{v_j(\sigma_k)\ge
F_p(\sigma_k)\}} = 0,$ and \eqref{jnoughtToShow} holds.  

Case two: If $\eta_j\in(\sigma_k,\sigma_{k+1})$, then $j \notin \mathcal{J}_p$
and so $w_j<F_p(\eta_j)$.  Since $F_p(\cdot)$ is constant on
$[\sigma_k,\sigma_{k+1})$, this implies $v_j(t) \le v_j(\sigma_k)\le w_j <
F_p(\sigma_k)$. Then $1_{\{v_j(t) \ge F_p(\sigma_k)\}} = 1_{\{v_j(\sigma_k) \ge F_p(\sigma_k)\}} =0$ and \eqref{jnoughtToShow} holds.

Case three: If $\eta_j \ge \sigma_{k+1}$, then $\eta_j\ge t$ and so $v_j(t) =
v_j(\sigma_k)$ and \eqref{jnoughtToShow} holds.  

This proves \eqref{jnoughtToShow}.  Combining \eqref{interStartTimes-p1} with
\eqref{jnought1a} and \eqref{jnoughtToShow} and noting that if $j >
E(\sigma_k)$ then $v_j(\sigma_k) = w_j,$ we obtain 
\begin{equation*}
  \langle\chi1_{[F_p(\sigma_k),\infty)},\mathcal{Z}_p(t)\rangle
  = \sum_{j\in J(\sigma_k)\setminus j_0}v_j(\sigma_k)1_{\{v_j(\sigma_k)\ge F_p(\sigma_k)\}}
  +\sum_{j=E(\sigma_k)+1}^{E(t)}w_j1_{\{w_j\ge F_p(\sigma_k)\}}.
\end{equation*}
By \eqref{jnought2a}, including the $j_0$-term in the first sum is compensated by
subtracting $F_p(\sigma_k)$, which yields 
\begin{equation*}
  \langle\chi1_{[F_p(\sigma_k),\infty)},\mathcal{Z}_p(t)\rangle
  = \langle\chi 1_{[F_p(\sigma_k),\infty)},\mathcal{Z}_p(\sigma_k)\rangle -F_p(\sigma_k)
  +\sum_{j=E(\sigma_k)+1}^{E(t)}w_j1_{\{w_j\ge F_p(\sigma_k)\}}.
\end{equation*}
Applying \eqref{pWisAbove} proves \eqref{pWaboveEqn}.

It remains to show \eqref{pWaboveEqn} when $k=|\mathcal{J}_p|<\infty$ and
$t\in(\sigma_k,\infty)$. The argument is identical except for Cases two and
three in the proof of \eqref{jnoughtToShow}. Case two becomes
$\eta_j\in (\sigma_k,\infty)$. Then $j\notin\mathcal{J}_p$
and $w_j<F_p(\eta_j)$. Since $F_p(\cdot)$ is constant on $[\sigma_k,\infty)$,
$w_j<F_p(\sigma_k)$ and \eqref{jnoughtToShow} follows as before. Case three
becomes $\eta_j=\infty$. Then $v_j(\sigma_k)=v_j(t)=w_j$, which implies
\eqref{jnoughtToShow}. The remainder of the proof follows from
\eqref{interStartTimes-p1}--\eqref{jnoughtToShow} as before.  \end{proof}

We now show that SJF and SRPT have the same start times of frontier jobs, and
coinciding frontiers at those times. This is the principal technical result of
the paper.

\begin{lemma}\label{startTimes}
  Almost surely, $\mathcal{T}=\mathcal{T}_p$ and $F(\tau)=F_p(\tau)$ for all
  $\tau\in\mathcal{T}$.
\end{lemma}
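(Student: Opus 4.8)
The plan is to prove, by induction on $k\ge 1$, the stronger statement that almost surely $\min(k,|\mathcal{J}|)=\min(k,|\mathcal{J}_p|)$ and that $\tau_i=\sigma_i$, $F(\tau_i)=F_p(\sigma_i)$ for every $i\le\min(k,|\mathcal{J}|)$; the lemma then follows by letting $k\to\infty$. The guiding principle is that both the frontier start times and the frontier heights can be extracted from the workload process $W(\cdot)$, which does not depend on the policy and which, by Lemma \ref{workload} and the analogous identity for SRPT from \cite{DoGrPu}, satisfies $W=\langle\chi,\mathcal{Z}\rangle=\langle\chi,\mathcal{Z}_p\rangle$. For the base case I would note that, almost surely, $\gamma_1<\infty$ is the first busy instant under either policy, the job started then is the smallest one present, and (using Lemma \ref{CF}(ii) for SRPT) $\gamma_1\in\mathcal{T}\cap\mathcal{T}_p$; hence $\tau_1=\sigma_1=\gamma_1$ and $F(\tau_1)$, $F_p(\sigma_1)$ both equal the smallest residual present at $\gamma_1$.

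For the inductive step, assume $\tau_k=\sigma_k=:\tau$ and $F(\tau_k)=F_p(\sigma_k)=:f$, and set
\[
  G(t)=W(\tau)-f+\sum_{j=E(\tau)+1}^{E(t)}w_j1_{\{w_j\ge f\}},\qquad t\ge\tau,
\]
a function of $\tau$, $f$ and the primitives only. By Lemma \ref{interStartTimes-n} and $W=\langle\chi,\mathcal{Z}\rangle$, we have $\langle\chi1_{[0,f)},\mathcal{Z}(t)\rangle=W(t)-G(t)\ge 0$ for $t\in(\tau_k,\tau_{k+1}]$ (and for all $t>\tau_k$ when $k=|\mathcal{J}|$). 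Using the definition of SJF I would verify that on the open interval $(\tau_k,\tau_{k+1})$ the server is, at each instant, either processing a job whose residual lies in $(0,f)$ — so $W(t)>G(t)$ — or idling with an empty queue — so $W(t)=0$; whereas at $\tau_{k+1}$ Lemma \ref{interStartTimes-n}(i) together with $F(\tau_{k+1})\ge f$ force $\langle\chi1_{[0,f)},\mathcal{Z}(\tau_{k+1})\rangle=0$, hence $W(\tau_{k+1})=G(\tau_{k+1})>0$. This yields the policy-free description: $\tau_{k+1}$ is the smallest $t>\tau_k$ with $W(t)=G(t)$ and $W(t)>0$, and no such $t$ exists precisely when $k=|\mathcal{J}|$. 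An identical argument for SRPT — with Lemma \ref{interStartTimes-p} in place of Lemma \ref{interStartTimes-n}, Lemma \ref{CF} locating the next jump of $F_p$, and the SRPT properties of \cite{DoGrPu} in place of the SJF definition — gives the same description of $\sigma_{k+1}$ with $G$ replaced by the analogously defined $\tilde G$. Since $\tau_k=\sigma_k$ and $F(\tau_k)=F_p(\sigma_k)$, we have $G\equiv\tilde G$, so $\tau_{k+1}$ exists iff $\sigma_{k+1}$ does, and then $\tau_{k+1}=\sigma_{k+1}=:\tau'$.

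It remains to compare $F(\tau')$ with $F_p(\tau')$. Under either policy, every job of $J(\tau')$ that the server has begun before $\tau'$ has size at most $f$ and has been completed by $\tau'$, so $F(\tau')$ and $F_p(\tau')$ are the smallest original sizes among the not-yet-begun jobs of $J(\tau')$ under SJF and SRPT respectively. No job of $J(\tau')$ of size exceeding $f$ has been begun by either server before $\tau'$; and a job of $J(\tau')$ of size exactly $f$ has been begun before $\tau'$ if and only if it is the frontier job started at some earlier frontier time of height $f$, so the number of size-$f$ jobs of $J(\tau')$ begun before $\tau'$ equals $|\{i\le k:F(\tau_i)=f\}|$ under SJF and $|\{i\le k:F_p(\sigma_i)=f\}|$ under SRPT. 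By the induction hypothesis these counts agree, so the two policies leave the same multiset of sizes among the jobs of $J(\tau')$ present at $\tau'$; in particular $F(\tau')=F_p(\tau')$, which closes the induction.

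I expect the main obstacle to be the reduction, in the inductive step, of $\tau_{k+1}$ and $\sigma_{k+1}$ to one and the same functional of the policy-independent workload $W$. The SRPT side is the delicate one: because of preemption the current frontier job $j_0$ is not served contiguously, so to see that the mass at or above level $f$ evolves exactly as under SJF one must invoke Lemma \ref{interStartTimes-p}, and to pin down the instant at which $F_p$ next rises (and to rule out the server working, before $\sigma_{k+1}$, on a job of residual at least $f$) one must invoke Lemma \ref{CF}. A secondary point needing care is the ``tie'' case $F(\tau_{k+1})=f$, which the counting argument above handles without tracking individual job indices.
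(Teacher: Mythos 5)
Your proposal is correct, and at the top level it is the paper's proof: the same induction over frontier start times, driven by the same three ingredients (Lemmas \ref{interStartTimes-n}, \ref{interStartTimes-p}, and \ref{CF}), with the same treatment of the case $|\mathcal{J}|\neq|\mathcal{J}_p|$. The two inductive sub-steps are, however, executed differently. For $\tau_{k+1}=\sigma_{k+1}$ the paper runs a two-sided contradiction: if $\sigma_{k+1}>\tau_{k+1}$, then Lemmas \ref{interStartTimes-n} and \ref{interStartTimes-p} show all SRPT mass at time $\tau_{k+1}$ sits at or above $F_p(\sigma_k)$, and Lemma \ref{CF}(ii) forces $\tau_{k+1}\in\mathcal{T}_p$; if $\sigma_{k+1}<\tau_{k+1}$, a symmetric computation plus the SJF dynamics \eqref{gamma.y} force $\sigma_{k+1}\in\mathcal{T}$. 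You repackage exactly these two observations as a single policy-free characterization ($\tau_{k+1}$ and $\sigma_{k+1}$ are both the first $t$ after $\tau_k=\sigma_k$ with $W(t)=G(t)>0$), which costs you the extra instant-by-instant verification on the open interval that you correctly flag as the delicate point; for SJF it reduces to the fact that any job started in $(\tau_k,\tau_{k+1})$ has size strictly below $f$ (else its start time would lie in $\mathcal{T}$), and for SRPT it is precisely Lemma \ref{CF}(ii). For $F(\tau_{k+1})=F_p(\tau_{k+1})$ your route is genuinely different: the paper compares workloads at or above the two candidate levels via \eqref{WisAbove}, \eqref{pWisAbove}, and \eqref{FisFpcompare} and contradicts a nonempty gap $[F(\tau_{k+1}),F_p(\tau_{k+1}))$, whereas you count the size-$f$ jobs already begun under each policy and match the counts through the induction hypothesis. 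Your counting argument is sound and handles ties at level $f$ cleanly, but two of its SRPT inputs deserve explicit justification: that every job begun before $\sigma_{k+1}$ is completed by $\sigma_{k+1}$ (this is not non-preemptivity; it follows from $C_p(\sigma_{k+1})=F_p(\sigma_{k+1})\ge f$ together with \cite{DoGrPu} Lemma 5.3(i), since a begun job has residual strictly below $f$), and that a size-$f$ job with $\eta_j<\sigma_{k+1}$ necessarily satisfies $w_j=F_p(\eta_j)$ and so lies in $\mathcal{J}_p$. Both hold, so the argument closes.
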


\begin{proof} 
Clearly, $\sigma_1=\inf\{t\ge0:Z_0+E(t)\ge 1\}=\gamma_1$.  For the SJF
queue, $w_{j_1}=y_1=F(\gamma_1)$, so $j_1\in\mathcal{J}$ and thus
$\gamma_1\in\mathcal{T}$, implying $\tau_1=\gamma_1$. Conclude that
$\tau_1=\sigma_1$. Moreover, $F(\tau_1)=\min\{w_j:j\in
J(\tau_1)\}=F_p(\tau_1)$. Now fix $k<|\mathcal{J}|\wedge|\mathcal{J}_p|$ and
assume that $\tau_l=\sigma_l$ and $F(\tau_l)=F_p(\tau_l)$ for all
$l=1,\dots,k$.  We will show that $\tau_{k+1}=\sigma_{k+1}$ and
$F(\tau_{k+1})=F_p(\tau_{k+1})$.  

Note first that by definition of the start times, $\mathcal{Z}(\tau_{k+1})\neq
{\bf 0}$ and \newline $\mathcal{Z}_p(\sigma_{k+1})\neq {\bf 0}$. Since both
queues have the same workload process $W(\cdot)$, this implies that
$W(\tau_{k+1})\wedge W(\sigma_{k+1})>0$, and so $\mathcal{Z}(\sigma_{k+1})\neq
{\bf 0}$ and $\mathcal{Z}_p(\tau_{k+1})\neq {\bf 0}$.  We first show that
$\tau_{k+1}=\sigma_{k+1}$. 

Suppose that $\sigma_{k+1}>\tau_{k+1}$.  Then
$\tau_{k+1}\in(\sigma_k,\sigma_{k+1})$, so by Lemma \ref{interStartTimes-p}
and the induction hypothesis,
\begin{equation*}
  \langle \chi 1_{[F_p(\sigma_k),\infty)},\mathcal{Z}_p(\tau_{k+1})\rangle
  = W(\tau_k)-F(\tau_k)
  +\sum_{j=E(\tau_k)+1}^{E(\tau_{k+1})}w_j1_{\{w_j\ge F(\tau_k)\}}. 
  \label{startTimes1}
\end{equation*}
Using $t=\tau_{k+1}$ in \eqref{WaboveEqn} of Lemma \ref{interStartTimes-n},
rewrite the right side of the display above to obtain
\begin{equation*}
  \langle \chi 1_{[F_p(\sigma_k),\infty)},\mathcal{Z}_p(\tau_{k+1})\rangle
  = \langle\chi 1_{[F(\tau_k),\infty)},\mathcal{Z}(\tau_{k+1}) \rangle.
\end{equation*}
Since $F(\tau_k)\le F(\tau_{k+1})$, the right side is bounded below by $
\langle\chi 1_{[F(\tau_{k+1}),\infty)},\mathcal{Z}(\tau_{k+1})\rangle$, and
bounded above by $W(\tau_{k+1})$, both of which are equal by \eqref{WisAbove}
of Lemma \ref{interStartTimes-n}. Thus,
\begin{equation*}
  \langle \chi 1_{[F_p(\sigma_k),\infty)},\mathcal{Z}_p(\tau_{k+1})\rangle
  = W(\tau_{k+1}),
\end{equation*}
which implies that 
\begin{equation}\label{CisF}
  \langle 1_{[0,F_p(\sigma_k))}, \mathcal{Z}_p(\tau_{k+1})\rangle
 	= \langle\chi 1_{[0,F_p(\sigma_k))}, \mathcal{Z}_p(\tau_{k+1})\rangle
	=0.
\end{equation}
Since $\mathcal{Z}_p(\tau_{k+1})\neq {\bf 0}$, \eqref{CisF} implies that
$C_p(\tau_{k+1})\ge F_p(\sigma_k)$. But $F_p(\cdot)$ is constant on
$[\sigma_k,\sigma_{k+1})$, so $C_p(\tau_{k+1})\ge F_p(\tau_{k+1})$, which
implies by Lemma \ref{CF} that $\tau_{k+1}\in\mathcal{T}_p$, a contradiction.

Now suppose that $\sigma_{k+1}<\tau_{k+1}$. We proceed analogously to the
above argument, with the roles of $\tau_{k+1},\sigma_{k+1}$ and Lemmas
\ref{interStartTimes-n} and \ref{interStartTimes-p} reversed. We have
$\sigma_{k+1}\in(\tau_k,\tau_{k+1})$, so by Lemma \ref{interStartTimes-n}
and the induction hypothesis,
\begin{equation*}
  \langle \chi 1_{[F(\tau_k),\infty)},\mathcal{Z}(\sigma_{k+1})\rangle
  = W(\sigma_k)-F_p(\sigma_k)
  +\sum_{j=E(\sigma_k)+1}^{E(\sigma_{k+1})}w_j1_{\{w_j\ge F_p(\sigma_k)\}}. 
  \label{startTimes1}
\end{equation*}
Using $t=\sigma_{k+1}$ in \eqref{pWaboveEqn} of Lemma \ref{interStartTimes-p},
rewrite the right side of the display above to obtain
\begin{equation*}
  \langle \chi 1_{[F(\tau_k),\infty)},\mathcal{Z}(\sigma_{k+1})\rangle
  = \langle\chi 1_{[F_p(\sigma_k),\infty)},\mathcal{Z}_p(\sigma_{k+1}) \rangle.
\end{equation*}
Since $F_p(\sigma_k)\le F_p(\sigma_{k+1})$, the right side is bounded below by $
\langle\chi 1_{[F_p(\sigma_{k+1}),\infty)},\mathcal{Z}_p(\sigma_{k+1})\rangle$, and
bounded above by $W(\sigma_{k+1})$, both of which are equal by \eqref{pWisAbove}
of Lemma \ref{interStartTimes-p}. Thus,
\begin{equation*}
  \langle \chi 1_{[F(\tau_k),\infty)},\mathcal{Z}(\sigma_{k+1})\rangle
  = W(\sigma_{k+1}),
\end{equation*}
which, since $F(\cdot)$ is constant on $[\tau_k,\tau_{k+1})$, implies that 
\begin{equation}\label{CisF2}
  \langle\chi 1_{[0,F(\sigma_{k+1}))}, \mathcal{Z}(\sigma_{k+1})\rangle
	=0.
\end{equation}
Since $\mathcal{Z}(\sigma_{k+1})\neq {\bf 0}$, \eqref{CisF2} implies that
the set $I=\{j\in J(\sigma_{k+1}): w_j(\sigma_{k+1})\ge
F(\sigma_{k+1})\}$ is nonempty. Let
$j^*=\operatorname{argmin}\{\gamma_{n(j)}:j\in I\}$ be the job in $I$ with the
earliest start time. Then by \eqref{CisF2},
$w_{j_{n(j^*)-1}}(\sigma_{k+1})=0$, which implies that
$\gamma_{n(j^*)-1}+y_{n(j^*)-1}\le\sigma_{k+1}$ and $Z_0+E(\sigma_{k+1})\ge
n(j^*)$. Conclude from \eqref{gamma.y} that $\gamma_{n(j^*)}=\sigma_{k+1}$.
Moreover, the definition of $F(\cdot)$ implies that
$w_{j^*}=F(\sigma_{k+1})$, and so $\sigma_{k+1}\in\mathcal{T}$, a
contradiction.

The previous two arguments imply that $\tau_{k+1}=\sigma_{k+1}$.  We now show
that $F(\tau_{k+1})=F_p(\tau_{k+1})$. Suppose that
$F_p(\tau_{k+1})>F(\tau_{k+1})$ so that the interval
$[F(\tau_{k+1}),F_p(\tau_{k+1}))$ is nonempty. By definition of $\mathcal{T}$,
there is a job $j_*\in\mathcal{J}$ such that $\gamma_{n(j_*)}=\tau_{k+1}$ and
$w_{j_*}=F(\tau_{k+1})$. Then $w_{j_*}(\tau_{k+1})=w_{j_*}$ and so 
\begin{equation}
  \langle\chi 1_{[F(\tau_{k+1}),F_p(\tau_{k+1}))}, 
  				\mathcal{Z}(\tau_{k+1})\rangle >0.
  \label{FisFpcontra}
\end{equation}

Note that for $j\in J(\tau_{k+1})$, $w_j\ge F_p(\tau_{k+1})$ implies that
$w_j>F(\tau_{k+1})$ and so $w_j(\tau_{k+1})=w_j\ge F_p(\tau_{k+1})$.
Conversely, $w_j(\tau_{k+1})\ge F_p(\tau_{k+1})$ clearly implies $w_j\ge
w_j(\tau_{k+1})\ge F_p(\tau_{k+1})$. So 
\begin{equation}\label{FisFp-1}
  w_j(\tau_{k+1})1_{\{w_j(\tau_{k+1})\ge F_p(\tau_{k+1})\}}
=w_j1_{\{w_j\ge F_p(\tau_{k+1})\}}.
\end{equation}
Similarly, $w_j\ge F_p(\tau_{k+1})$ implies $w_j>F(\tau_{k+1})\ge
F(\tau_k)=F_p(\tau_k)$, and so $v_j(\tau_{k+1})=w_j\ge F_p(\tau_{k+1})$.
Conversely, $v_j(\tau_{k+1})\ge F_p(\tau_{k+1})$ gives $w_j\ge
v_j(\tau_{k+1})\ge F_p(\tau_{k+1})$ and so
\begin{equation}\label{FisFp-2}
  v_j(\tau_{k+1})1_{\{v_j(\tau_{k+1})\ge F_p(\tau_{k+1})\}}
=w_j1_{\{w_j\ge F_p(\tau_{k+1})\}}.
\end{equation}
Together, \eqref{FisFp-1} and \eqref{FisFp-2} imply that 
\[
\sum_{j\in J(\tau_{k+1})} 
	w_j(\tau_{k+1})1_{\{w_j(\tau_{k+1})\ge F_p(\tau_{k+1})\}}
	=\sum_{j\in J(\tau_{k+1})} 
	v_j(\tau_{k+1})1_{\{v_j(\tau_{k+1})\ge F_p(\tau_{k+1})\}},
\]
and so 
\begin{equation}
    		\langle\chi 1_{[F_p(\tau_{k+1}),\infty)},
		\mathcal{Z}(\tau_{k+1})\rangle 
	=  \langle\chi 1_{[F_p(\tau_{k+1}),\infty)}, 
    		\mathcal{Z}_p(\tau_{k+1})\rangle.
  \label{FisFpcompare}
\end{equation}
By \eqref{FisFpcompare},
\begin{equation}
  \begin{aligned}
    \langle\chi 1_{[F(\tau_{k+1}),\infty)}, \mathcal{Z}(\tau_{k+1})\rangle
    & = \langle\chi 1_{[F(\tau_{k+1}),F_p(\tau_{k+1}))}, 
    					\mathcal{Z}(\tau_{k+1})\rangle \\
    &\qquad\qquad + 
    		\langle\chi 1_{[F_p(\tau_{k+1}),\infty)},
		\mathcal{Z}(\tau_{k+1})\rangle \\
    & = \langle\chi 1_{[F(\tau_{k+1}),F_p(\tau_{k+1}))}, 
    					\mathcal{Z}(\tau_{k+1})\rangle \\
    &\qquad\qquad + \langle\chi 1_{[F_p(\tau_{k+1}),\infty)}, 
    		\mathcal{Z}_p(\tau_{k+1})\rangle.
  \end{aligned}
  \label{FisFpcrux}
\end{equation}
But the left side and second term on the right side of \eqref{FisFpcrux} are
both equal to $W(\tau_{k+1})$ by Lemmas \ref{interStartTimes-n} and
\ref{interStartTimes-p}. So the first term on the right side equals zero,
contradicting \eqref{FisFpcontra}. This argument is symmetric if the roles of
$F(\tau_{k+1})$ and $F_p(\tau_{k+1})$ are reversed, and we conclude that
$F(\tau_{k+1})=F_p(\tau_{k+1})$.

This completes the proof if $|\mathcal{J}|=|\mathcal{J}_p|$. But this is the
only possibility, for if $|\mathcal{J}|>|\mathcal{J}_p|$, then for
$k=|\mathcal{J}_p|$, there exists $\tau_{k+1}\in\mathcal{J}$ such that
$\tau_{k+1}\in(\sigma_k,\infty)$. This yields a contradiction by the argument
in the paragraph leading to \eqref{CisF} (substituting $\infty$ for
$\sigma_{k+1}$ there). A similar contradiction is implied by
$|\mathcal{J}|<|\mathcal{J}_p|$.
\end{proof}

\begin{corollary}\label{equal frontiers}
Almost surely, $F(\cdot)\equiv F_p(\cdot)$ and for all $t\ge 0$, 
\[ \langle\chi 1_{[F(t),\infty)},\mathcal{Z}(t)\rangle
=\langle\chi 1_{[F(t),\infty)},\mathcal{Z}_p(t)\rangle.\]
\end{corollary}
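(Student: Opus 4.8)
The plan is to read the corollary off from Lemma~\ref{startTimes} together with the piecewise-constant structure of the two frontier processes; no new estimates are required. First I would note that $F(\cdot)$ and $F_p(\cdot)$ are nondecreasing and right continuous, that both equal $0$ on $[0,\tau_1)$ (no job has started in either queue there), and that each can increase only at a time in $\mathcal{T}$, respectively $\mathcal{T}_p$. For $F$ this is immediate from $F(t)=\max_n\{y_n:\gamma_n\le t\}$: a jump of $F$ at some $\gamma_n$ forces $y_n=F(\gamma_n)$, hence $j_n\in\mathcal{J}$ and $\gamma_n=\gamma_{n(j_n)}\in\mathcal{T}$. For $F_p$, which is the running supremum of $C_p(\cdot)$, an increase at $t$ forces $C_p(t)=F_p(t)$, and since $F_p(t)>0$ there, Lemma~\ref{CF}(ii) gives $t\in\mathcal{T}_p$. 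It follows that $F$ is constant on $[0,\tau_1)$, on each interval $[\tau_k,\tau_{k+1})$, and on $[\tau_{|\mathcal{J}|},\infty)$ if $|\mathcal{J}|<\infty$, and similarly for $F_p$ with the $\sigma_k$. By Lemma~\ref{startTimes}, $\tau_k=\sigma_k$ for all $k$ (in particular $|\mathcal{J}|=|\mathcal{J}_p|$) and $F(\tau_k)=F_p(\tau_k)$, so the two step functions have identical breakpoints and identical values there; hence $F(\cdot)\equiv F_p(\cdot)$ almost surely.

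For the second identity, fix $t\ge0$. If $t<\tau_1$, both $\mathcal{Z}(t)$ and $\mathcal{Z}_p(t)$ equal ${\bf 0}$ and there is nothing to prove. Otherwise, since $Z_0+E(\gamma_n)\ge n$ and $E(\cdot)$ is finite valued, $\gamma_n\to\infty$, so there is a unique $k\in\mathbb{N}$ with $\tau_k\le t<\tau_{k+1}$, using the convention $\tau_{|\mathcal{J}|+1}=\infty$. By the first part, $F(t)=F(\tau_k)=F_p(\sigma_k)=F_p(t)$, so it suffices to show $\langle\chi1_{[F(\tau_k),\infty)},\mathcal{Z}(t)\rangle=\langle\chi1_{[F_p(\sigma_k),\infty)},\mathcal{Z}_p(t)\rangle$. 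When $t=\tau_k$, both sides equal $W(\tau_k)$ by Lemma~\ref{interStartTimes-n}(i) and Lemma~\ref{interStartTimes-p}(i). When $t\in(\tau_k,\tau_{k+1})$ --- including the case $t\in(\tau_k,\infty)$ with $k=|\mathcal{J}|<\infty$ --- Lemma~\ref{interStartTimes-n}(ii) and Lemma~\ref{interStartTimes-p}(ii), together with $\tau_k=\sigma_k$ and $F(\tau_k)=F_p(\sigma_k)$, express both sides as the identical quantity $W(\tau_k)-F(\tau_k)+\sum_{j=E(\tau_k)+1}^{E(t)}w_j1_{\{w_j\ge F(\tau_k)\}}$. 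This proves the claim.

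All of the substantive work is contained in Lemma~\ref{startTimes}, so I expect no genuine obstacle here. The only points meriting care are the assertion that $F$ and $F_p$ can increase only on $\mathcal{T}$ and $\mathcal{T}_p$ respectively (for $F_p$ this is exactly where Lemma~\ref{CF}(ii) is used), the routine matching of a given $t$ to the half-open interval $[\tau_k,\tau_{k+1})$ containing it, and the trivial degenerate case in which no job ever starts, where every measure in sight is ${\bf 0}$.
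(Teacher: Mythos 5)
Your proposal is correct and follows essentially the same route as the paper, which simply observes that $F$ and $F_p$ are right-continuous and constant on the intervals $[\tau_k,\tau_{k+1})$ (and on $[\tau_{|\mathcal{J}|},\infty)$ when $|\mathcal{J}|<\infty$) and then invokes Lemmas \ref{startTimes}, \ref{interStartTimes-n}, and \ref{interStartTimes-p}. You have merely filled in the details the paper leaves implicit (in particular the use of Lemma \ref{CF}(ii) to see that $F_p$ can only increase on $\mathcal{T}_p$), which is fine.
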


\begin{proof} The first statement follows from Lemma \ref{startTimes} since
both $F(\cdot)$ and $F_p(\cdot)$ are right-continuous and constant on the
intervals $[\tau_k,\tau_{k+1})$ for all $k<|\mathcal{J}|$ and
$[\tau_k,\infty)$ for $k=|\mathcal{J}|$. The second statement follows from
this fact as well, combined with the first statement, and Lemmas
\ref{interStartTimes-n}, \ref{interStartTimes-p}, and \ref{startTimes}.
\end{proof}

Since the frontier processes are identical, the notation $F(t)$ will be used
hereafter for the frontier process under either policy. We will use the fact
that the state descriptors under either policy are identical above the
frontier.

\begin{corollary}\label{same above}
Almost surely, for all measurable $f:\mathbb{R}_+\to\mathbb{R}_+$ and all $t\ge 0$, 
\[ \langle f 1_{[F(t),\infty)},\mathcal{Z}(t)\rangle
=\langle f 1_{[F(t),\infty)},\mathcal{Z}_p(t)\rangle.\]
\end{corollary}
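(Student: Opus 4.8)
The plan is to establish the stronger, measure-level statement: almost surely, for every $t\ge0$ the two measures $\mathcal{Z}(t)1_{[F(t),\infty)}$ and $\mathcal{Z}_p(t)1_{[F(t),\infty)}$ coincide. The corollary then follows immediately, since for any measurable $f:\mathbb{R}_+\to\mathbb{R}_+$ we have $\langle f1_{[F(t),\infty)},\mathcal{Z}(t)\rangle=\langle f,\mathcal{Z}(t)1_{[F(t),\infty)}\rangle$ and likewise for $\mathcal{Z}_p(t)$. Note that one should not expect the full state descriptors to agree: under SJF a job whose size equals the current frontier can be partially served and hence put mass strictly below $F(t)$, so the comparison must be confined to $[F(t),\infty)$ from the start. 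I would work on the almost sure event on which Corollary \ref{equal frontiers} holds (so $F\equiv F_p$ and $\langle\chi1_{[F(t),\infty)},\mathcal{Z}(t)\rangle=\langle\chi1_{[F(t),\infty)},\mathcal{Z}_p(t)\rangle$ for all $t$) and on which \cite{DoGrPu} Lemma 5.3 holds for all $t$. Fixing $t\ge0$ and writing $\mathcal{Z}(t)1_{[F(t),\infty)}=\mathcal{Z}(t)1_{(F(t),\infty)}+\mathcal{Z}(t)(\{F(t)\})\,\delta_{F(t)}^+$ and similarly for $\mathcal{Z}_p(t)$, it suffices to verify two things: (i) the restrictions to the open half-line $(F(t),\infty)$ agree; and (ii) the atoms at $F(t)$ agree.

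For (i), the key observation is a dichotomy at the level of individual jobs: for $j\in J(t)$ one has $w_j(t)>F(t)$ if and only if $w_j>F(t)$, and in that case $w_j(t)=w_j$. Indeed, if $w_j>F(t)$ then job $j$ cannot have started service under SJF by time $t$, since $\gamma_{n(j)}\le t$ would force $w_j=y_{n(j)}\le F(t)$ by definition of $F(t)$; hence $s_j(t)=0$ and $w_j(t)=w_j$. Conversely $w_j(t)>F(t)$ forces $w_j\ge w_j(t)>F(t)$. The same dichotomy holds verbatim for $v_j(t)$ under SRPT, using \cite{DoGrPu} Lemma 5.3(ii) (which, as already invoked in the proof of Lemma \ref{CF}, gives $v_j(s)=w_j$ on $[0,t]$ whenever $w_j>F_p(t)=F(t)$) together with $v_j(t)\le w_j$. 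Consequently both $\mathcal{Z}(t)1_{(F(t),\infty)}$ and $\mathcal{Z}_p(t)1_{(F(t),\infty)}$ are equal to $\sum_{j\in J(t)}\delta_{w_j}1_{\{w_j>F(t)\}}$, which proves (i).

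For (ii), if $F(t)=0$ there is nothing to prove, because neither $\mathcal{Z}(t)$ nor $\mathcal{Z}_p(t)$ charges $\{0\}$. If $F(t)>0$, I would extract the atom from the scalar identity in Corollary \ref{equal frontiers}: for any $\mu\in\mathbf{M}$, $\langle\chi1_{[F(t),\infty)},\mu\rangle=\langle\chi1_{(F(t),\infty)},\mu\rangle+F(t)\,\mu(\{F(t)\})$; applying this with $\mu=\mathcal{Z}(t)$ and $\mu=\mathcal{Z}_p(t)$, cancelling the $(F(t),\infty)$-terms (equal by part (i)), and using the $\chi$-weighted equality on $[F(t),\infty)$ leaves $F(t)\,\mathcal{Z}(t)(\{F(t)\})=F(t)\,\mathcal{Z}_p(t)(\{F(t)\})$, and dividing by $F(t)>0$ gives (ii). Combining (i) and (ii) yields $\mathcal{Z}(t)1_{[F(t),\infty)}=\mathcal{Z}_p(t)1_{[F(t),\infty)}$ for all $t$ on the chosen event, and integrating $f$ completes the proof. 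The only genuinely delicate point is (ii): the atom exactly at the frontier cannot be pinned down by the job-counting of (i), since an SJF job of size $F(t)$ currently in service and an SRPT job of size $F(t)$ need not be in the same state; one is forced to recover it indirectly from the already-established $\chi$-weighted equality and then divide out $F(t)$. Everything else is bookkeeping, once one is careful to localize the comparison to $[F(t),\infty)$.
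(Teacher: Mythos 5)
Your proposal is correct and follows essentially the same route as the paper's proof: agreement on the open half-line $(F(t),\infty)$ via the job-level dichotomy ($w_j(t)>F(t)$ iff $w_j>F(t)$ and $w_j(t)=w_j$, and likewise for $v_j$), followed by recovering the atom at $F(t)$ from the $\chi$-weighted identity of Corollary \ref{equal frontiers} and dividing by $F(t)$. Your explicit treatment of the $F(t)=0$ case and the justification of the dichotomy merely spell out steps the paper leaves implicit.
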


\begin{proof} 
Fix $t\ge0$. Clearly for all $j\in J(t)$, $w_j(t)>F(t)$ if and only if
$w_j>F(t)$ and $w_j(t)=w_j$, and $v_j(t)>F(t)$ if and only if $w_j>F(t)$ and
$v_j(t)=w_j$. So for all measurable $f$,
\begin{equation}\label{cor2-1}
 \langle f 1_{(F(t),\infty)},\mathcal{Z}(t)\rangle
 =\sum_{j\in J(t)}f(w_j)1_{\{w_j>F(t)\}}
 =\langle f 1_{(F(t),\infty)},\mathcal{Z}_p(t)\rangle.
\end{equation}
Using the special case $f=\chi$ and combining with Corollary 
\ref{equal frontiers} yields 
\begin{equation}
  \langle 1_{\{F(t)\}},\mathcal{Z}(t)\rangle
  =\langle 1_{\{F(t)\}},\mathcal{Z}_p(t)\rangle.
  \label{cor2-2}
\end{equation}
Combining \eqref{cor2-1} and \eqref{cor2-2} completes the proof.
\end{proof}

We will also need the following simple bound for the workload below the frontier in
the SRPT queue. By the workload equation in \eqref{Def1}, 
\begin{align*} W(s) &= W(t) - (V(t)-V(s))+t-s-(I(t)-I(s))\\
	&\le W(t) - (V(t)-V(s))+t-s,
   \end{align*}
almost surely for all $t\ge s\ge0$, since the cumulative idle time process $I(\cdot)$
is nondecreasing. By \eqref{workload-chi}, and by splitting the workload at
times $s$ and $t$ into residual service times that are strictly below the
frontier, and those that are at or above the frontier, 
\begin{multline*} \langle\chi 1_{[0,F(s))},\mathcal{Z}_p(s)\rangle + \langle\chi
1_{[F(s),\infty)},\mathcal{Z}_p(s)\rangle  
    \\ \le  \langle\chi 1_{[0,F(t))},\mathcal{Z}_p(t)\rangle + \langle\chi
1_{[F(t),\infty)},\mathcal{Z}_p(t)\rangle 
- \sum_{j=E(s)+1}^{E(t)}w_j+t-s.
\end{multline*}
Note that $v_j(u)\ge F(u)$ implies $v_j(u)=w_j$ for all $u\ge0$ and $j\in
J(u)$. Thus,
\begin{multline*} \langle\chi 1_{[0,F(s))},\mathcal{Z}_p(s)\rangle +
  \sum_{j\in J(s)}w_j1_{\{v_j(s)\ge F(s)\}} 
  \\ \le  \langle\chi 1_{[0,F(t))},\mathcal{Z}_p(t)\rangle + \sum_{j\in
  J(t)}w_j1_{\{v_j(t)\ge F(t)\}} 
- \sum_{j=E(s)+1}^{E(t)}w_j+t-s.
\end{multline*}
Since $F(\cdot)$ is non-decreasing and residual service times $v_j(\cdot)$ are
non-increasing, $w_j1_{\{v_j(t)\ge F(t)\}}\le w_j1_{\{v_j(s)\ge F(s)\}}$ for
all $j\in J(s)$. Also, $w_j1_{\{v_j(t)\ge F(t)\}}\le w_j$ for all
$j=E(s)+1,\dots,E(t)$. So 
\[ \sum_{j\in J(t)}w_j1_{\{v_j(t)\ge F(t)\}} 
	- \sum_{j=E(s)+1}^{E(t)}w_j - \sum_{j\in J(s)}w_j1_{\{v_j(s)\ge F(s)\}}
    \le 0,
\]
and thus almost surely for all $t\ge s\ge0$, 
\begin{equation} 
  \langle\chi 1_{[0,F(s))},\mathcal{Z}_p(s)\rangle 
   \le  \langle\chi 1_{[0,F(t))},\mathcal{Z}_p(t)\rangle +t-s. \label{dyn ineq}
\end{equation}

\section{Invariance of fluid limit}
\label{sec:limits}

Now we shift our point of view from a single model to a sequence of models.
After defining the sequence and making appropriate asymptotic assumptions, we
will use the work of the previous section to show that the models have the
same fluid limit under SJF and SRPT.

Let $\mathcal{R}$ be a sequence of positive real numbers increasing to
infinity.  For each $r \in \mathcal{R}$, there is an associated stochastic
model with initial condition $Z^r_0$ and $\{w^r_j:j=\dots,-2,-1\}$,
arrival process $E^r(\cdot)$ and service times $\{w_j^r:j=1,2,\dots\}$.  These
primitives have parameters $\alpha^r, \nu^r,$ and $\rho^r$, and are defined on
a probability space $(\Omega^r, \mathcal{F}^r, {\bf P}^r)$.  Then for each $r
\in \mathcal{R}$, the stochastic primitives give rise to measure-valued state
descriptors $\mathcal{Z}^r(\cdot)$ and $\mathcal{Z}_p^r(\cdot)$ under the SJF
and SRPT policies respectively, as well as a common frontier process
$F^r(\cdot)$.  Define fluid-scaled versions of the following processes: 
\begin{equation} \begin{aligned} 
     \bar{F}^r(t)             &= F^r(rt), \\
     \bar{\mathcal{V} }^r(t)      &= \frac{1}{r} \mathcal{V}^r(rt),\\
     \bar{V}^r(t)             &= \frac{1}{r} V^r(rt),\\
     \bar{\mathcal{Z}}^r(t)   &= \frac{1}{r} \mathcal{Z}^r(rt),\\
     \bar{\mathcal{Z}}^r_p(t) &= \frac{1}{r} \mathcal{Z}^r_p(rt), \\
     \bar{W}^r(t)             &= \frac{1}{r} W^r(rt). \\
\end{aligned} \label{Def2}
\end{equation}   

Let $\alpha>0$ and $\nu\in\mathbf{M}$ be a probability measure that does not
charge the origin, such that $\rho=\alpha\langle\chi,\nu\rangle\le 1$.  We
impose the following asymptotic assumptions, as $r \to \infty$, on the
sequence of models.  For the sequence of exogenous arrival processes, assume
that
\begin{equation}\label{Rem,1} 
  \frac{1}{r} E^r(rt) {\Rightarrow} \alpha(\cdot),  
\end{equation}
where $\alpha(t)=\alpha t$ for all $t\ge0$.  For the sequence of service time
distributions, assume that
\begin{equation} 
\nu^r \overset{w}{\longrightarrow} \nu \quad\text{and}\quad
	\{\nu^r: r \in \mathcal{R}\}\quad \text{is uniformly integrable.} 
\label{Rem,2} \end{equation}
It follows from $\eqref{Rem,2}$ that $\rho^r \to \rho$. 

Additionally, assume that as $r \to \infty$,
\begin{equation} 
  \left(\bar{\mathcal{Z}}^r(0), \left\langle \chi, \bar{\mathcal{Z}}^r(0)
  \right \rangle \right)  {\Rightarrow}\left(\mathcal{Z}_0, 
  \left\langle \chi, \mathcal{Z}_0 \right \rangle \right), \label{Rem,3}
\end{equation}
where $\mathcal{Z}_0 \in {\bf M}$ is a random measure satisfying 
  \begin{align} 
   \left\langle \chi, \mathcal{Z}_0\right \rangle &< \infty \quad\text{a.s.,}
   \label{Rem,4}\\ 
   \langle 1_{\{0\}},\mathcal{Z}_0\rangle &=0 \quad\text{a.s.} \label{Rem,5} 
\end{align}

It is shown in \cite{DoGrPu} that, under the assumptions
\eqref{Rem,1}--\eqref{Rem,5} as $r\to \infty$, there is joint convergence
\begin{equation}\label{jointConvergence}
  \left(\bar{\mathcal{V}}^r(\cdot),
  \bar{V}^r(\cdot), \bar{W}^r(0), \bar{\mathcal{Z}}^r_p(\cdot)\right)
  \;{\Rightarrow}\; \left(\mathcal{V}^*(\cdot), V^*(\cdot), W^*(0),
  \mathcal{Z}^*(\cdot)\right).
\end{equation} 
Here, $\mathcal{V}^*(t)=\alpha t\nu$ for all $t\ge0$, and
$\mathcal{Z}^*(\cdot)$ is almost surely an {\em SRPT fluid model solution} for
data $(\alpha,\nu)$ and initial condition $\mathcal{Z}^*(0)$, equal in
distribution to $\mathcal{Z}_0$; see Theorem 5.16 in \cite{DoGrPu} as well as
Section 2.2 in \cite{DoGrPu} for a definition of the fluid model solutions.
Such fluid model solutions are analyzed in detail in \cite{DoGrPu,DoGrPu2}.

Within the proof of \eqref{jointConvergence}, it is assumed by invoking the
Skorohod representation theorem that all random elements are defined on a
common probability space $(\Omega^*, \mathcal{F}^*, {\bf P}^*)$ such that,
almost surely as $r \to \infty$, 
\begin{equation} \big(\tilde{\mathcal{V}}^r(\cdot), \tilde{V}^r(\cdot),
  \tilde{W}^r(0), \tilde{\mathcal{Z}}^r_p(\cdot)\big) {\rightarrow}
  \big(\mathcal{V}^*(\cdot), V^*(\cdot), W^*(0), \mathcal{Z}^*(\cdot)\big),
  \label{Skor1} \end{equation}
uniformly on compact time intervals, where all objects with a tilde denote 
Skorohod representations; see (84) in \cite{DoGrPu}.  As part of this proof,
it is shown that almost surely, 
\begin{equation} \lim_{r \to \infty} \left\langle \chi 1_{[0, \tilde{F}^r(t))},
  \tilde{\mathcal{Z}}_p^r(t) \right\rangle = 0,\quad\text{for all $t\ge0$},
 \label{eqn96} 
\end{equation}
where $\tilde{F}^r(t) = \sup_{s\in[0,t]} \sup
\{x\in\mathbb{R}_+:  \langle 1_{[0, x)},
\tilde{\mathcal{Z}}_p^r(s)\rangle = 0\}$ with
$\sup\varnothing=0$. (See (96) in the proof of Lemma 5.23(i) in \cite{DoGrPu}.
Note that since $\rho \leq 1$, 5.23(i) is the relevant statement.  Also, see
the beginning of Section 5.3 in \cite{DoGrPu}; a fixed $\omega$ is chosen from
an event of probability one in $\Omega^*$.  Since Lemma 5.23 is proved for this fixed
$\omega$, $\eqref{eqn96}$ holds almost surely.)  We will use this fact to
establish a similar statement for the original process
$\bar{\mathcal{Z}}^r_p(\cdot)$, but we need to upgrade the pointwise convergence
to uniform convergence on compact time intervals $[0, T].$  This is shown in
the proof of the following lemma.

\begin{lemma}\label{unif conv} For all $T \in [0, \infty),$   
\begin{equation*} \sup_{t \in [0, T]} 
  	\left \langle \chi 1_{[0, \bar{F}^r(t))},
	\bar{\mathcal{Z}}_p^r(t) \right \rangle\Par 0,\quad\text{as
	$r\rightarrow\infty$}.
\end{equation*}
  \end{lemma}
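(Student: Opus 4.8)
The plan is to transfer the pointwise convergence \eqref{eqn96} for the Skorohod representations to the claimed uniform convergence in probability for the original processes $\bar{\mathcal{Z}}_p^r(\cdot)$, in two stages. First, I would work entirely with the Skorohod representations on $(\Omega^*,\mathcal{F}^*,\mathbf{P}^*)$ and show that the convergence in \eqref{eqn96} is in fact \emph{uniform} on $[0,T]$. The key tool is monotonicity: I claim that for fixed $r$, the map $t\mapsto \langle\chi 1_{[0,\tilde F^r(t))},\tilde{\mathcal{Z}}_p^r(t)\rangle$ admits a useful monotone majorant. This is exactly the content of the dynamic inequality \eqref{dyn ineq} established at the end of Section~\ref{sec:3}: applied to the Skorohod-scaled processes at times $rs\le rt$ (and then rescaled), it gives $\langle\chi 1_{[0,\tilde F^r(s))},\tilde{\mathcal{Z}}_p^r(s)\rangle \le \langle\chi 1_{[0,\tilde F^r(t))},\tilde{\mathcal{Z}}_p^r(t)\rangle + (t-s)$ for $s\le t$, so the function $g^r(t):=\langle\chi 1_{[0,\tilde F^r(t))},\tilde{\mathcal{Z}}_p^r(t)\rangle$ satisfies $g^r(s)\le g^r(t)+(t-s)$; equivalently $t\mapsto g^r(t)+t$ is nondecreasing. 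Here I must be a little careful that the $\tilde F^r$ appearing in \eqref{eqn96} agrees with the fluid-scaled frontier $\bar F^r$ used in \eqref{dyn ineq}; by definition both are the running supremum of the left edge of $\tilde{\mathcal{Z}}_p^r$, so up to the time-scaling $rt\leftrightarrow t$ they coincide, and I would note this explicitly.

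Given this monotone-majorant structure, uniform convergence follows from a standard ``Dini-type'' argument. Since $g^r(t)+t$ is nondecreasing in $t$ and converges pointwise (by \eqref{eqn96}, $g^r(t)\to 0$, so $g^r(t)+t\to t$) to the continuous limit $t\mapsto t$, convergence is automatically uniform on the compact interval $[0,T]$: partition $[0,T]$ by a fine grid $0=t_0<t_1<\cdots<t_m=T$ with mesh $<\varepsilon$; for $t\in[t_i,t_{i+1}]$, monotonicity gives $g^r(t)+t \le g^r(t_{i+1})+t_{i+1}$, hence $g^r(t)\le g^r(t_{i+1})+(t_{i+1}-t)\le g^r(t_{i+1})+\varepsilon$, and $g^r(t)\ge 0$; taking the max over the finitely many grid points and using $g^r(t_i)\to 0$ shows $\sup_{t\in[0,T]}g^r(t)\to 0$ almost surely. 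Thus $\sup_{t\in[0,T]}\langle\chi 1_{[0,\tilde F^r(t))},\tilde{\mathcal{Z}}_p^r(t)\rangle\to 0$ $\mathbf{P}^*$-almost surely.

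Finally, I would pass back from the Skorohod representations to the original sequence. Almost sure convergence of a real-valued sequence implies convergence in probability, so $\sup_{t\in[0,T]}\langle\chi 1_{[0,\tilde F^r(t))},\tilde{\mathcal{Z}}_p^r(t)\rangle\Par 0$ on $(\Omega^*,\mathcal{F}^*,\mathbf{P}^*)$. Since the Skorohod representation preserves joint distributions of the primitives and hence of all processes constructed from them, $\sup_{t\in[0,T]}\langle\chi 1_{[0,\bar F^r(t))},\bar{\mathcal{Z}}_p^r(t)\rangle$ has the same distribution as its tilde counterpart for each $r$; convergence in probability to a constant ($0$) is a statement about the marginal laws, so it transfers, giving $\sup_{t\in[0,T]}\langle\chi 1_{[0,\bar F^r(t))},\bar{\mathcal{Z}}_p^r(t)\rangle\Par 0$ as claimed. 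The main obstacle is the first stage: correctly identifying that \eqref{dyn ineq} yields precisely the ``nondecreasing after adding $t$'' property for the quantity in \eqref{eqn96}, and confirming the frontier in \eqref{eqn96} is the same object (running sup of the left edge) to which \eqref{dyn ineq} applies — once that is pinned down, the Dini argument and the Skorohod transfer are routine.
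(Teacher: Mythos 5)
Your proposal is correct and follows essentially the same route as the paper: reduce to the Skorohod representations by equality in distribution, combine the pointwise convergence \eqref{eqn96} with the dynamic inequality \eqref{dyn ineq} on a fine partition of $[0,T]$, and conclude uniform almost sure convergence. Your observation that \eqref{dyn ineq} says $t\mapsto g^r(t)+t$ is nondecreasing is just a compact restatement of the grid-point bound the paper uses, so no further comment is needed.
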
 
  
\begin{proof} Since the Skorohod representations in \eqref{Skor1} are equal
in distribution to the original processes, it suffices to show that,
$\mathbf{P}^*$-almost surely,
\begin{equation}
\lim_{r \to \infty} \sup_{t \in
    [0, T]} \left \langle \chi 1_{[0, \tilde{F}^r(t))},
    \tilde{\mathcal{Z}}_p^r(t) \right \rangle  = 0.
  \label{suffL6}
\end{equation}
To that end, let $\mathcal{D}\subset\Omega^*$ be the event of probability one on which
\eqref{eqn96} holds, and for each $r\in\mathcal{R}$ define the event
\begin{multline*} \mathcal{E}^r = \left\{ \left \langle \chi
  1_{[0, \tilde{F}^r(s))}, \tilde{\mathcal{Z}}_p^r(s) \right \rangle 
  \le \left \langle \chi 1_{[0, \tilde{F}^r(t))},
  \tilde{\mathcal{Z}}_p^r(t) \right \rangle + t - s\right. \\
  \phantom{\Bigl\}} \text{for all $0 \leq s \leq t < \infty$}  \Bigr\}. 
\end{multline*}
Since the dynamic inequality $\eqref{dyn ineq}$ is true almost surely, it also
holds under fluid scaling $\mathbf{P}^r$-almost surely for each
$r\in\mathcal{R}$, and therefore $\mathbf{P}^*$-almost surely for each
$\tilde{\mathcal{Z}}^r_p(\cdot)$, $r\in\mathcal{R}$. Thus,
$\mathbf{P}^*\left( \mathcal{D} \cap \bigcap_{r\in\mathcal{R}}
\mathcal{E}^r\right)=1. $

Fix $T \in [0, \infty)$ and $\epsilon > 0$.  Let $0 = a_0, a_1, ... , a_k = T$
be a partition of $[0, T]$ such that $\max_{j\le k} a_j - a_{j - 1} \le
\epsilon$.  For $t \in [0, T]$, let $a_t = \min\{a_j:a_j\ge t\}$.
Then on $\mathcal{D}$,
\[  \lim_{r\rightarrow\infty}
   \max_{j\le k}\left \langle \chi 1_{[0, \tilde{F}^r(a_j))}, 
   				\tilde{\mathcal{Z}}_p^r(a_j) \right \rangle 
				=0, 
\]
and on $\bigcap_{r\in\mathcal{R}}\mathcal{E}^r$, for all
$r\in\mathcal{R}$ and $t\in[0,T]$,
\begin{align*}
\left \langle \chi 1_{[0, \tilde{F}^r(t))}, \tilde{\mathcal{Z}}_p^r(t)
\right \rangle  &\le  \left \langle \chi 1_{[0,
\tilde{F}^r(a_t))}, \tilde{\mathcal{Z}}_p^r(a_t) \right \rangle +
a_t - t \\
& \le \max_{j\le k}\left\langle\chi 1_{[0,\tilde{F}^r(a_j))},\tilde{\mathcal{Z}}_p^r(a_j)\right\rangle+\epsilon.
\end{align*}
So on  $\mathcal{D}\cap\bigcap_{r\in\mathcal{R}}\mathcal{E}^r$,
\[
\limsup_{r\rightarrow\infty}\sup_{t\in[0,T]}
   \left \langle \chi 1_{[0, \tilde{F}^r(t))}, \tilde{\mathcal{Z}}_p^r(t)
\right \rangle  \le \epsilon, 
\]
which proves \eqref{suffL6}.
\end{proof}

Next, we establish a bound for the fluid scaled mass near the origin, which is
valid under either policy.

\begin{lemma}\label{lowMass}
  Let $T<\infty$. For each $\epsilon,\eta\in (0,1)$ there exists $\delta>0$
  such that 
  \[ \liminf_{r\rightarrow\infty}\mathbf{P}^r\left( \sup_{t\in[0,T]}
  \left(\langle 1_{[0,\delta]},\bar{\mathcal{Z}}^r(t)\rangle
  \vee\langle 1_{[0,\delta]},\bar{\mathcal{Z}}^r_p(t)\rangle\right)
  \le \epsilon \right)\ge 1-\eta.
 \]
\end{lemma}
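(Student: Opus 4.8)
The plan is to bound the low mass under SJF and SRPT separately, but to exploit Corollary~\ref{same above} so that the only genuinely new work is controlling mass \emph{below} the frontier, since at or above the frontier the two state descriptors agree. Fix $T<\infty$ and $\epsilon,\eta\in(0,1)$. I would first treat the mass strictly above (or at) the frontier: for any $\delta>0$, write $\langle 1_{[0,\delta]},\bar{\mathcal Z}^r(t)\rangle = \langle 1_{[0,\delta]}1_{[0,F^r(rt))},\bar{\mathcal Z}^r(t)\rangle + \langle 1_{[0,\delta]}1_{[F^r(rt),\infty)},\bar{\mathcal Z}^r(t)\rangle$, and similarly for $\bar{\mathcal Z}^r_p$. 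By Corollary~\ref{same above} the second terms for the two policies are a.s.\ equal (take $f=1_{[0,\delta]}$), so it is enough to handle that term for SRPT. For SRPT the mass at or above the frontier consists only of jobs $j$ with $v_j=w_j$, i.e.\ jobs that have not yet been touched; such a job contributes to $\langle 1_{[0,\delta]}1_{[F^r,\infty)},\cdot\rangle$ only if $w_j\le\delta$, and since $\mathcal V^r(rt)$ dominates the number of arrived jobs with $w^r_j\le\delta$, this term is bounded by $\langle 1_{[0,\delta]},\bar{\mathcal V}^r(T)\rangle$ plus the analogous initial-condition contribution $\langle 1_{[0,\delta]},\bar{\mathcal Z}^r(0)\rangle$. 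Using \eqref{Rem,2}, \eqref{Rem,3} and the joint convergence \eqref{jointConvergence}, $\langle 1_{[0,\delta]},\bar{\mathcal V}^r(T)\rangle\Rightarrow \alpha T\nu([0,\delta])$ and $\langle 1_{[0,\delta]},\bar{\mathcal Z}^r(0)\rangle\Rightarrow\langle 1_{[0,\delta]},\mathcal Z_0\rangle$; since $\nu$ does not charge the origin and $\mathcal Z_0$ a.s.\ puts no mass at $0$ by \eqref{Rem,5}, both limits tend to $0$ as $\delta\downarrow0$. So I can choose $\delta$ small enough that, with probability at least $1-\eta/2$ in the limit, the at-or-above-frontier term is $\le\epsilon/2$ for all $t\in[0,T]$ under either policy.

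The remaining task is the mass strictly below the frontier, $\langle 1_{[0,\delta]}1_{[0,F^r(rt))},\bar{\mathcal Z}^r(t)\rangle$ and its SRPT analogue. For SRPT I would leverage Lemma~\ref{unif conv}: since $\langle\chi 1_{[0,\bar F^r(t))},\bar{\mathcal Z}_p^r(t)\rangle\Par 0$ uniformly on $[0,T]$, and on the set $[0,\delta]$ we have $\chi$ bounded below by... no — that is the wrong direction, so instead note that on $[\delta,\bar F^r(t))$ the mass is controlled by $\tfrac1\delta\langle\chi 1_{[0,\bar F^r(t))},\bar{\mathcal Z}^r_p(t)\rangle$, and the genuinely problematic part is mass in $[0,\delta)$ strictly below the frontier. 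Here I would instead argue directly: a job counted in $\langle 1_{[0,\delta]},\bar{\mathcal Z}_p^r(t)\rangle$ has residual $v_j\le\delta$, so either $w_j\le\delta$ (handled above as an arrival/initial contribution) or $w_j>\delta$ and the job has already received positive service, i.e.\ $\eta_j\le rt$; the number of such ``partially served large jobs'' present at a single time is, by the SRPT dynamics described after \eqref{dyn ineq} and in \cite{DoGrPu}, at most one per busy period plus possibly a bounded correction, so on fluid scale this vanishes — more carefully, I expect to bound it using the left-edge/frontier structure so that it is dominated by a term of the form $\langle 1_{[0,\delta]},\bar{\mathcal V}^r\rangle+o(1)$. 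For SJF the same dichotomy applies: a job with $w_j(t)\le\delta$ and $w_j>\delta$ must be in service (SJF serves one job at a time to completion), so there is at most \emph{one} such job present at any instant, contributing at most $1/r\to0$ on fluid scale; the rest have $w_j\le\delta$ and are again bounded by arrivals plus initial mass.

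Putting these together: choose $\delta$ from the first paragraph small enough that the at-or-above-frontier bound holds and also $\alpha T\nu([0,\delta])+\mathbf E\langle 1_{[0,\delta]},\mathcal Z_0\rangle$-type quantities are small; then the below-frontier contribution for SRPT is handled by Lemma~\ref{unif conv} together with the partially-served-jobs bound, and for SJF by the single-job-in-service observation, so that in the limit $\sup_{t\in[0,T]}(\langle 1_{[0,\delta]},\bar{\mathcal Z}^r(t)\rangle\vee\langle 1_{[0,\delta]},\bar{\mathcal Z}^r_p(t)\rangle)\le\epsilon$ with probability at least $1-\eta$. The main obstacle I anticipate is making the ``few partially-served large jobs below the frontier'' claim rigorous for SRPT with an explicit uniform-in-$t$ bound that vanishes on fluid scale; the cleanest route is probably to reduce it, via the decomposition above and Corollary~\ref{same above}, to a statement only about SJF (where it is elementary, being at most one job) together with Lemma~\ref{unif conv} controlling the SRPT-specific mass in $[\delta,\bar F^r)$, thereby avoiding a delicate direct count of SRPT's partially-served jobs altogether.
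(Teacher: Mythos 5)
Your SJF half is essentially the paper's argument: under SJF at most one job at a time is partially served, so every other job with residual in $[0,\delta]$ has original size at most $\delta$, and the count is bounded by $\langle 1_{[0,\delta]},\bar{\mathcal{Z}}^r(0)\rangle+\langle 1_{[0,\delta]},\bar{\mathcal{V}}^r(T)\rangle+1/r$, which is then handled via \eqref{Rem,3}, \eqref{Rem,5}, \eqref{jointConvergence} and the Portmanteau theorem exactly as you describe. The problem is the SRPT half, and you have correctly located the obstruction but not closed it. What must be controlled is the \emph{number} of SRPT jobs with residual in $[0,\delta)$ that lie strictly below the frontier and have original size larger than $\delta$ --- partially served jobs, of which SRPT (unlike SJF) can accumulate many simultaneously. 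Neither of your tools reaches this set: Corollary~\ref{same above} equates the two state descriptors only on $[F(t),\infty)$, so it cannot transfer the below-frontier count to the SJF side; and Lemma~\ref{unif conv} controls the $\chi$-weighted mass $\langle\chi 1_{[0,\bar{F}^r(t))},\bar{\mathcal{Z}}^r_p(t)\rangle$, which after dividing by $\delta$ bounds the count in $[\delta,\bar{F}^r(t))$ but says nothing about the count in $[0,\delta)$, where many jobs can carry arbitrarily little $\chi$-mass. Your proposed reduction to ``avoid a delicate direct count'' therefore leaves exactly the problematic set uncovered.

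The paper does not attempt this count at all: it invokes Lemma 5.11 of \cite{DoGrPu}, which asserts directly that there is $\delta_1>0$ with $\liminf_r\mathbf{P}^r(\sup_{t\in[0,T]}\langle 1_{[0,\delta_1]},\bar{\mathcal{Z}}^r_p(t)\rangle\le\epsilon)\ge 1-\eta/2$; it then takes $\delta$ to be the minimum of $\delta_1$ and the two further thresholds coming from \eqref{Rem,5} and from $\nu$ not charging the origin, and does the elementary SJF bookkeeping you also found. The SRPT low-mass bound is genuinely nontrivial and is the content of that cited lemma; as written, your argument for the SRPT term is incomplete unless you either cite that result or reproduce its proof.
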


\begin{proof} 
Fix $\epsilon, \eta\in(0,1)$.  By Lemma 5.11 in \cite{DoGrPu}, there exists
$\delta_1>0$ such that
  \begin{equation*}
	\liminf_{r\rightarrow\infty}\mathbf{P}^r\left(
    	 \sup_{t\in[0,T]} 
    	\langle 1_{[0,\delta_1]},\bar{\mathcal{Z}}^r_p(t)\rangle \le \epsilon
	\right)\ge 1-\frac{\eta}{2}.
  \end{equation*}
By \eqref{Rem,5}, there exists $\delta_2>0$ such that 
\begin{equation}
  \mathbf{P}\left( \langle 1_{[0,\delta_2]},\mathcal{Z}_0\rangle 
  	<\frac{\epsilon}{3} \right)
  \ge 1-\frac{\eta}{2}.
  \label{lowMass2}
\end{equation}
Since $\nu$ puts no mass at the origin, there exists $\delta_3>0$
such that
\begin{equation}\label{lowMass3}
  \alpha T\langle 1_{[0,\delta_3]},\nu\rangle <\frac{\epsilon}{3}.
\end{equation}
Let $\delta=\min\{\delta_1,\delta_2,\delta_3\}$ and define the events
\begin{align*}
	\Omega^r_1 & = \left\{
		\sup_{t\in[0,T]} \langle 1_{[0,\delta]},
			\bar{\mathcal{Z}}^r_p(t)\rangle \le \epsilon\right\}, \\
			\Omega^r_2 & = \left\{  \langle
			1_{[0,\delta]},\bar{\mathcal{Z}}^r(0)\rangle 
  	<\frac{\epsilon}{3} \right\}, \\
	\Omega^r_3 & = \left\{ \langle
	1_{[0,\delta]},\bar{\mathcal{V}}^r(T)\rangle<\frac{\epsilon}{3} \right\}.
\end{align*}
Then $\liminf_{r\rightarrow\infty}\mathbf{P}^r(\Omega^r_1)\ge1-\eta/2$ since
$\delta\le\delta_1$.  The set $\{\xi\in\mathbf{M}:\langle
1_{[0,\delta_2]},\xi\rangle<\epsilon/3\}$ is open in the weak topology. So by
\eqref{Rem,3}, \eqref{lowMass2}, the Portmanteau theorem, and since
$\delta\le\delta_2$, $\liminf_{r\rightarrow\infty}\mathbf{P}^r\left(
\Omega^r_2\right)\ge 1-\eta/2$.
Similarly, by \eqref{jointConvergence}, \eqref{lowMass3}, the Portmanteau
theorem, and since $\delta\le \delta_3$, $\liminf_{r\rightarrow\infty}\mathbf{P}^r\left(
\Omega^r_3\right) =1$. Hence,
\begin{equation*}
  \liminf_{r\to\infty}\mathbf{P}^r\left(
  \Omega^r_1\cap\Omega^r_2\cap\Omega^r_3 \right)\ge 1-\eta.
\end{equation*}
Note that for all $t\ge0$, at most one $j\in J(t)$ satisfies
$0<w_j(t)<w_j$ by \eqref{gamma.y}. So almost surely for all $t\ge0$,
$\sum_{j\in J(t)}1_{(0,\delta]}(w_j(t))1_{(\delta,\infty)}(w_j)\le 1$. Under
fluid scaling, this implies
\[
\sup_{t\in [0,T]} \langle 1_{[0,\delta]},\bar{\mathcal{Z}}^r(t)\rangle
\le \langle 1_{[0,\delta]},\bar{\mathcal{Z}}^r(0)\rangle
+\langle 1_{[0,\delta]},\bar{\mathcal{V}}^r(T)\rangle +\frac{1}{r}.
\]
Then almost surely on $\Omega^r_1\cap\Omega^r_2\cap\Omega^r_3$, 
\begin{equation*}
\sup_{t\in [0,T]} \langle 1_{[0,\delta]},\bar{\mathcal{Z}}^r(t)\rangle
\le \frac{\epsilon}{3}+\frac{\epsilon}{3}+\frac{1}{r},
\end{equation*}
which is bounded above by $\epsilon$ for sufficiently large $r$.
\end{proof}

We are now ready to prove that the fluid scaled state descriptors under SJF
and SRPT converge together. 

\begin{theorem}\label{main theorem} 
For all $T<\infty$, 
\[
\sup_{t\in[0,T]}\dis[\bar{\mathcal{Z}}^r(t),\bar{\mathcal{Z}}_p^r(t)]	
\overset{\mathbf{P}}{\longrightarrow}0, \quad \text{as $r\rightarrow\infty$.}
\]
\end{theorem}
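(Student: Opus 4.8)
The plan is to use the Prohorov metric and split each state descriptor into its mass strictly below the common frontier $F^r(t)$ and its mass at or above $F^r(t)$. By Corollary \ref{same above}, the two policies produce \emph{identical} measures above the frontier, i.e.\ $\bar{\mathcal{Z}}^r(t)$ and $\bar{\mathcal{Z}}_p^r(t)$ agree when restricted to $[F^r(t),\infty)$. Hence the Prohorov distance $\dis[\bar{\mathcal{Z}}^r(t),\bar{\mathcal{Z}}_p^r(t)]$ is controlled by the two ``error'' measures $\bar{\mathcal{Z}}^r(t)1_{[0,F^r(t))}$ and $\bar{\mathcal{Z}}_p^r(t)1_{[0,F^r(t))}$: if both of these have small total mass, then two measures that agree above the frontier are close in the Prohorov metric. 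More precisely, I would first record the elementary fact that if $\mu,\lambda\in\mathbf{M}$ satisfy $\mu(B)=\lambda(B)$ for all Borel $B\subset[a,\infty)$, and $\mu([0,a))\vee\lambda([0,a))\le\epsilon$, then $\dis[\mu,\lambda]\le\epsilon$ (one can take $B^\epsilon\supset B$ absorb the discrepancy, or simply move the small low mass). So it suffices to show that
\[
\sup_{t\in[0,T]}\Bigl(\langle 1_{[0,F^r(t))},\bar{\mathcal{Z}}^r(t)\rangle
\vee \langle 1_{[0,F^r(t))},\bar{\mathcal{Z}}_p^r(t)\rangle\Bigr)\Par 0.
\]

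For the SRPT term, the low mass below the frontier is handled by combining Lemma \ref{unif conv} with Lemma \ref{lowMass}. Lemma \ref{unif conv} gives $\sup_{t\le T}\langle\chi 1_{[0,\bar F^r(t))},\bar{\mathcal{Z}}_p^r(t)\rangle\Par 0$, i.e.\ the \emph{weighted} mass below the frontier vanishes; to pass to the unweighted count, fix $\delta>0$ and split $[0,\bar F^r(t))$ into $[0,\delta]$ and $(\delta,\bar F^r(t))$. On the first piece, Lemma \ref{lowMass} controls $\langle 1_{[0,\delta]},\bar{\mathcal{Z}}_p^r(t)\rangle$; on the second piece, the count is bounded by $\delta^{-1}\langle\chi 1_{(\delta,\bar F^r(t))},\bar{\mathcal{Z}}_p^r(t)\rangle\le\delta^{-1}\langle\chi 1_{[0,\bar F^r(t))},\bar{\mathcal{Z}}_p^r(t)\rangle$, which vanishes by Lemma \ref{unif conv}. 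A standard $\epsilon$-$\eta$ argument (choose $\delta$ from Lemma \ref{lowMass} for the target accuracy, then send $r\to\infty$) then yields $\sup_{t\le T}\langle 1_{[0,F^r(t))},\bar{\mathcal{Z}}_p^r(t)\rangle\Par 0$.

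For the SJF term, the key structural observation is that under SJF at most one job in the queue has been partially served — formally, $\sum_{j\in J(t)}1_{(0,w_j)}(w_j(t))\le 1$, as already noted in the proof of Lemma \ref{lowMass}. Consequently every job counted in $\langle 1_{(0,F^r(t))},\mathcal{Z}^r(t)\rangle$, except at most one, has $w_j(t)=w_j<F^r(t)$, so it is also counted in $\langle 1_{(0,F^r(t))},\mathcal{Z}_p^r(t)\rangle$ (recall $v_j(t)=w_j$ whenever $w_j\le F^r(t)$ is not yet started under SRPT — more carefully, by Corollary \ref{same above}'s proof $v_j(t)=w_j$ iff $w_j(t)=w_j$ for jobs above the frontier, and below the frontier SRPT has also served or begun each such job). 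I would argue directly: $\langle 1_{[0,F^r(t))},\mathcal{Z}^r(t)\rangle\le\langle 1_{[0,F^r(t))},\mathcal{Z}_p^r(t)\rangle + 1$ almost surely, because the multiset of \emph{initial} sizes $w_j<F^r(t)$ for $j\in J(t)$ is the same for both policies, each such job contributes a point in $(0,w_j]\subset[0,F^r(t))$ to each $\mathcal{Z}^{(\cdot)}$ unless it has completed (mass $0$), and the only discrepancy in ``has it completed'' status, beyond the at-most-one partially served SJF job, is accounted for by comparing completion counts — which are equal since both policies have the same workload and the same frontier. After fluid scaling the $+1$ becomes $1/r\to 0$, so $\sup_{t\le T}\langle 1_{[0,F^r(t))},\bar{\mathcal{Z}}^r(t)\rangle\le\sup_{t\le T}\langle 1_{[0,F^r(t))},\bar{\mathcal{Z}}_p^r(t)\rangle + 1/r\Par 0$ by the SRPT case.

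The main obstacle I anticipate is the bookkeeping in the SJF step: making rigorous the claim that, below the frontier, SJF and SRPT have \emph{almost} the same set of not-yet-completed jobs (up to the single partially-served SJF job and up to the single current-SRPT job $j$ with $v_j(t)=C_p(t)<F^r(t)$ possibly). I would base this on Lemma \ref{workload} and Corollary \ref{equal frontiers}: both policies have the same workload $W(t)=\langle\chi,\mathcal{Z}^r(t)\rangle=\langle\chi,\mathcal{Z}_p^r(t)\rangle$ and the same mass above the frontier (Corollary \ref{same above}), hence the same total $\chi$-mass strictly below the frontier; combined with the fact that the below-frontier jobs have, under either policy, residual sizes that are either $0$ or equal to $w_j$ (for SRPT, since $v_j<F$ forces the job to have started and by Lemma 5.3 of \cite{DoGrPu} those are the only sub-frontier sizes; for SJF, $w_j(t)\in\{0,w_j\}$ except for the at most one in service) — actually for SRPT the ``current'' job can have an intermediate residual size, so there too at most one job is partially complete. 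Thus below the frontier each policy has at most one partially-served job; matching the $\chi$-masses and the fully-present jobs (common multiset of $w_j$'s) then forces the two \emph{counts} to differ by at most $2/r$ after scaling, which still vanishes. I would phrase this last comparison carefully to avoid an off-by-constant error, but the constant is irrelevant in the limit.
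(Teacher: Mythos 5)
Your overall architecture is the paper's: split a closed set $B$ at level $\delta$ and at the frontier, control the $[0,\delta]$ piece by Lemma \ref{lowMass}, the $(\delta,\bar F^r(t))$ piece by Markov's inequality applied to the $\chi$-weighted mass below the frontier, and the $[\bar F^r(t),\infty)$ piece by Corollary \ref{same above}; your SRPT step is correct. The genuine gap is in your SJF step, where three claims fail. First, under SRPT there can be arbitrarily many partially served jobs below the frontier: every preemption leaves behind a job with a frozen intermediate residual (a size-$10$ job served to residual $8$, preempted by a size-$5$ arrival served to residual $3$, preempted in turn by a size-$2$ arrival, leaves two partial jobs below the frontier $10$). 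Lemma 5.3(i) of \cite{DoGrPu} only says there is no mass in $[0,C_p(t))$, i.e.\ below the left edge, not below the frontier. Second, a job with $w_j<F^r(t)$ that is untouched under SJF (hence counted in $\langle 1_{[0,F^r(t))},\mathcal{Z}^r(t)\rangle$ with residual $w_j$) may already have been completed under SRPT, so it need not be counted in $\langle 1_{[0,F^r(t))},\mathcal{Z}^r_p(t)\rangle$; your inequality $\langle 1_{[0,F^r(t))},\mathcal{Z}^r(t)\rangle\le\langle 1_{[0,F^r(t))},\mathcal{Z}^r_p(t)\rangle+1$ is therefore unjustified. Third, even granting equal $\chi$-mass below the frontier, the unweighted counts are not forced to agree up to an additive constant: ten residuals of size $0.1$ and one residual of size $1$ carry the same $\chi$-mass but counts differing by nine. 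So the job-by-job bookkeeping cannot be repaired as stated.

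The fix is the one-line identity you mention in your last paragraph but then abandon: by \eqref{workload-chi} (and its SRPT analogue, used in the derivation of \eqref{dyn ineq}) together with Corollary \ref{equal frontiers}, one has $\langle\chi 1_{[0,\bar F^r(t))},\bar{\mathcal{Z}}^r(t)\rangle=\bar W^r(t)-\langle\chi 1_{[\bar F^r(t),\infty)},\bar{\mathcal{Z}}^r(t)\rangle=\bar W^r(t)-\langle\chi 1_{[\bar F^r(t),\infty)},\bar{\mathcal{Z}}^r_p(t)\rangle=\langle\chi 1_{[0,\bar F^r(t))},\bar{\mathcal{Z}}^r_p(t)\rangle$. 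Hence Lemma \ref{unif conv} controls the weighted below-frontier mass for SJF exactly as for SRPT, and your Markov-plus-Lemma-\ref{lowMass} argument for the SRPT count applies verbatim to $\bar{\mathcal{Z}}^r(t)$, with no need to compare completion statuses of individual jobs. This is precisely the route the paper takes.
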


\begin{proof} Fix $T<\infty$ and $\epsilon,\eta>0$.  By Lemma \ref{lowMass},
  there exists $\delta>0$ such that the events
\begin{equation*}
  \Omega^r_1=\left\{ \sup_{t\in[0,T]}\left(\langle
  1_{[0,\delta]},\bar{\mathcal{Z}}^r(t)\rangle
  \vee \langle
  1_{[0,\delta]},\bar{\mathcal{Z}}^r_p(t)\rangle\right)
  \le\frac{\epsilon}{2}\right\}
\end{equation*}
satisfy $\liminf_{r\to\infty}\mathbf{P}^r(\Omega^r_1)\ge 1-\eta/2$. By Lemma 
\ref{unif conv}, the events
\begin{equation*} 
\Omega^r_2 = \left\{  \sup_{t \in [0,T]} 
	\langle \chi 1_{[0, \bar{F}^r(t))}, \bar{\mathcal{Z}}^r_p(t)\rangle 
	\le \frac {\delta \epsilon}{2} \right\} 
\end{equation*}
satisfy $\liminf_{r\to\infty}\mathbf{P}^r(\Omega^r_2)\ge 1-\eta/2$, and so
$\liminf_{r\to\infty}\mathbf{P}^r(\Omega^r_1\cap\Omega^r_2)\ge 1-\eta$.

Fix $t\in[0,T]$ and let $B \subset \R_+$ be closed.  By intersecting $B$ with
$[0,\delta]$, $(\delta, \bar{F}^r(t))$, and $[\bar{F}^r(t),\infty)$, and by
Markov's inequality,
\begin{equation*}
  \left \langle  1_B, \bar{\mathcal{Z}}^r(t) \right \rangle 
  	 \le \left \langle 1_{[0, \delta]}, \bar{\mathcal{Z}}^r(t) \right \rangle
	+ \frac{1}{\delta} \langle \chi 1_{[0, \bar{F}^r(t))}, 
		\bar{\mathcal{Z}}^r(t)  \rangle
  	+ \left \langle 1_{[\bar{F}^r(t)),\infty)\cap B}, \bar{\mathcal{Z}}^r(t)
	\right \rangle.
\end{equation*}
We can replace $\bar{\mathcal{Z}}^r(t)$ by $\bar{\mathcal{Z}}^r_p(t)$ almost
surely in the last term by Corollary \ref{same above}, and in the next to last
term by Corollary \ref{equal frontiers}, since
\[ \langle \chi 1_{[0, \bar{F}^r(t))}, \bar{\mathcal{Z}}^r(t) \rangle 
	= \bar{W}^r(t)- \langle \chi 1_{[\bar{F}^r(t),\infty)},
	\bar{\mathcal{Z}}^r(t)\rangle.
\]
Then use $[\bar{F}^r(t)),\infty)\cap B\subset B^\epsilon$ to get, almost surely on
$\Omega^r_1\cap\Omega^r_2$, 
\begin{equation}
  \langle  1_B, \bar{\mathcal{Z}}^r(t) \rangle 
  \le \frac{\epsilon}{2} + \frac{\epsilon}{2}
  	+ \langle 1_{B^\epsilon}, \bar{\mathcal{Z}}^r_p(t)
	 \rangle.
	\label{distance1}
\end{equation}
The same argument yields, almost surely on $\Omega^r_1\cap\Omega^r_2$, 
\begin{equation}
  \langle  1_B, \bar{\mathcal{Z}}^r_p(t)  \rangle 
  \le \epsilon + \langle 1_{B^\epsilon}, \bar{\mathcal{Z}}^r(t) \rangle.
	\label{distance2}
\end{equation}
Since $t\in[0,T]$ and $B\subset\mathbb{R}_+$ were arbitrary, \eqref{distance1}
and \eqref{distance2} imply that, almost surely on $\Omega^r_1\cap\Omega^r_2$, 
\begin{equation*}
  \sup_{t\in[0,T]}\dis[\bar{\mathcal{Z}}^r(t),\bar{\mathcal{Z}}^r_p(t)]\le\epsilon.
\end{equation*}
\end{proof}

\begin{proof}[Proof of Theorem \ref{upshot}.] 
By \eqref{jointConvergence}, $\bar{\mathcal{Z}}^r_p(\cdot) \Rightarrow
\mathcal{Z}^*(\cdot)$ as $r\to\infty$.   Theorem \ref{upshot} thus follows
from Theorem \ref{main theorem} and the converging together lemma (see e.g.\
Theorem 4.1 in \cite{Bi1968}). 
\end{proof}

\begin{center}
{\bf Acknowledgement}
\end{center}
We are grateful to Rami Atar for suggesting this project and conjecturing the
result.


\vspace{2ex}
\begin{minipage}{2in}
\footnotesize {\sc Department of Mathematics\\
University of Virginia\\
Charlottesville, VA 22904\\
E-mail:} gromoll@virginia.edu\\
\phantom{E-mail: } mpk2v@virginia.edu
\end{minipage} 

\end{document}